\newtheorem{lem}{Lemma}[section]
\newtheorem{thm}[lem]{Theorem}
\newtheorem{Conjecture}[lem]{Conjecture}
\begin{document}

\title{An Ore-type condition for existence of two disjoint cycles}
\author{Maoqun Wang and Jianguo Qian\footnote{Corresponding author. E-mail: jgqian@xmu.edu (J.G. Qian)}\\
\small School of Mathematical Sciences, Xiamen University, Xiamen 361005, PR China}
\date{}
\maketitle
{\small{\bf Abstract.}\quad  Let $n_{1}$ and $n_{2}$ be two integers with $n_{1},n_{2}\geq3$ and $G$ a graph of order $n=n_{1}+n_{2}$. As a generalization of Ore's degree condition for the existence of Hamilton cycle in $G$, El-Zahar proved that if $\delta(G)\geq \left\lceil\frac{n_{1}}{2}\right\rceil+\left\lceil\frac{n_{2}}{2}\right\rceil$ then $G$ contains two disjoint cycles of length $n_{1}$ and $n_{2}$. Recently,  Yan et. al considered the problem by extending the degree condition to degree sum condition and proved that if $d(u)+d(v)\geq n+4$ for any pair of non-adjacent vertices $u$ and $v$ of $G$, then $G$ contains two disjoint cycles of length $n_{1}$ and $n_{2}$. They further asked whether the degree sum condition can be improved to $d(u)+d(v)\geq n+2$. In this paper, we give a positive answer to this question. Our result also generalizes El-Zahar's result when $n_{1}$ and $n_{2}$ are both odd.
\vskip 0.3cm
\noindent{\bf Keywords:} degree sum condition; disjoint cycles

\section{Introduction}
We consider only finite simple graphs. For terminologies and notations not defined here we refer to \cite{Bondy}. Let $G$ be a graph with $n$ vertices. We denote by $V(G)$ and $E(G)$ the vertex set and edge set of $G$, respectively. For two vertices $u$ and $v$ in $G$, we denote by $uv$ the edge joining $u$ and $v$. A {\it neighbour} of a vertex $v$ is a vertex adjacent to $v$. The set of all  neighbours of $v$ is denoted by $N_G(v)$, or simply by $N(v)$, and the degree of $v$ is defined to be $d_G(v)=|N_G(v)|$. We use $\delta(G)$ to denote the minimum degree of vertices in $G$.  A path (resp., cycle) is called a {\it Hamilton path} (resp., {\it Hamilton cycle}) if it has length $n-1$ (resp., $n$). If $G$ has a Hamilton cycle,  then $G$ is called {\it Hamiltonian}. In general, if $G$ contains a cycle of length $k$ for every $k$ with $3\leq k\leq n$, then $G$ is called {\it pancyclic}. If, for any two distinct vertices $u$ and $v$, $G$ contains a Hamilton path with $u$ and $v$ as the two endvertices, then $G$ is called {\it Hamilton-connected}. Define
$$\sigma_{2}(G)=\min\{d(x)+d(y):x,y\in V(G),xy\notin E(G)\}.$$

The degree condition for the existence of cycle(s) with  specified length(s) is one of the most elementary concerns in graph theory. A classic result should be the one given by Dirac in 1952, which says that every graph of order $n$ with minimum degree at least $n$ has a Hamilton cycle. Since then, this result has been generalized to various forms in terms of degree condition or degree sum condition. We recall some typical results on this subject.

\begin{thm}\label{thm 1.1} (Dirac, \cite{Dirac})
For a graph $G$ of order $n$, if $n\geq3$ and $\delta(G)\geq\frac{n}{2}$ then $G$ is Hamiltonian.
\end{thm}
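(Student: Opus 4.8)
The plan is to use the classical extremal longest-path argument. First I would verify that $G$ is connected: if $G$ had two or more components, the smallest one would contain at most $\lfloor n/2\rfloor$ vertices, so any vertex inside it would have degree at most $\lfloor n/2\rfloor-1<n/2$, contradicting $\delta(G)\ge n/2$.

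Next I would fix a longest path $P=v_0v_1\cdots v_k$ in $G$. By maximality, every neighbour of the endpoint $v_0$ and every neighbour of the endpoint $v_k$ must already lie on $P$; in particular $k\ge d(v_0)\ge n/2$. The crucial step is to convert $P$ into a cycle spanning $V(P)$. To this end I would introduce the two index sets
$$S=\{i:1\le i\le k,\ v_0v_i\in E(G)\},\qquad T=\{i:1\le i\le k,\ v_{i-1}v_k\in E(G)\},$$
both viewed as subsets of $\{1,\dots,k\}$. Since all neighbours of $v_0$ and of $v_k$ lie among $\{v_1,\dots,v_k\}$ and $\{v_0,\dots,v_{k-1}\}$ respectively, we have $|S|=d(v_0)$ and $|T|=d(v_k)$, so $|S|+|T|\ge n$. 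As $k\le n-1$, inclusion--exclusion gives $|S\cap T|\ge |S|+|T|-k\ge n-k>0$, so some index $i$ lies in $S\cap T$. The edges $v_0v_i$ and $v_{i-1}v_k$ then let me reroute $P$ into the cycle
$$C=v_0v_1\cdots v_{i-1}v_kv_{k-1}\cdots v_iv_0$$
of length $k+1$ on the same vertex set $\{v_0,\dots,v_k\}$.

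Finally I would show that $C$ is a Hamilton cycle. If it were not, some vertex $w\notin V(C)$ would exist, and since $G$ is connected I may choose $w$ adjacent to a vertex $v_j$ of $C$. Deleting an edge of $C$ incident to $v_j$ and attaching $w$ produces a path strictly longer than $P$, contradicting the maximality of $P$. Hence $V(C)=V(G)$ and $G$ is Hamiltonian.

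The step I expect to be the main obstacle is the pigeonhole argument forcing $S\cap T\ne\emptyset$: the delicate point is to set up the two index sets so that a single common index $i$ simultaneously supplies the two crossing edges $v_0v_i$ and $v_{i-1}v_k$ needed to close the path into a cycle, while keeping the degree bound $|S|+|T|\ge n$ exact.
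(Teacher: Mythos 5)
Your proof is correct and complete. Note that the paper does not prove this statement at all: Theorem~\ref{thm 1.1} is quoted as a known result with a citation to Dirac's 1952 paper, so there is no in-paper argument to compare against. What you have written is the classical longest-path rotation proof (essentially the argument underlying Lemma~\ref{lem 2.1} as well): connectivity from the degree bound, maximality of $P$ forcing all neighbours of the endpoints onto $P$, the pigeonhole on the index sets $S$ and $T$ with $|S|+|T|\geq n>k=|\{1,\dots,k\}|$ to produce the crossing chords $v_0v_i$ and $v_{i-1}v_k$, and absorption of any leftover vertex to contradict maximality. All steps check out, including the boundary indices $i=1$ and $i=k$ (where one of the two ``chords'' is a path edge or the closing edge $v_0v_k$, still yielding a valid cycle on $V(P)$), and the hypothesis $n\geq 3$ guarantees $k+1\geq d(v_0)+1\geq \frac{n}{2}+1\geq 3$, so the rerouted cycle is genuine.
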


\begin{thm}\label{thm 1.2} (Ore, \cite{Ore})
For a graph $G$ of order $n$, if $n\geq3$ and $\sigma_{2}(G)\geq n$ then $G$ is Hamiltonian.
\end{thm}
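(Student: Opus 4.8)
The plan is to argue by contradiction, passing to an edge-maximal counterexample and then running a ``crossing'' counting argument on a Hamilton path. Suppose $G$ satisfies $\sigma_2(G)\geq n$ but is not Hamiltonian. Since adding edges only increases degrees and hence preserves the hypothesis, I may assume $G$ is edge-maximal with this property: $G$ is non-Hamiltonian, but $G+e$ has a Hamilton cycle for every non-edge $e$. Because the complete graph $K_n$ is Hamiltonian for $n\geq3$, our $G$ is not complete, so there exist two non-adjacent vertices $u$ and $v$.

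Next I would exploit the maximality. As $G+uv$ is Hamiltonian while $G$ is not, every Hamilton cycle of $G+uv$ must use the edge $uv$; deleting that edge from such a cycle produces a Hamilton path $P=v_1v_2\cdots v_n$ of $G$ with $v_1=u$ and $v_n=v$. Thus the problem is reduced to extracting a contradiction from the existence of a spanning $u$--$v$ path together with the degree condition.

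The heart of the argument is a counting on $P$. For each neighbour $v_i$ of $u$ (necessarily $i\geq2$, and in fact $i\leq n-1$ since $uv\notin E(G)$), I would claim that $v$ cannot be adjacent to the predecessor $v_{i-1}$: otherwise the edges $uv_i$ and $vv_{i-1}$ close the two path-segments into the Hamilton cycle $v_1v_2\cdots v_{i-1}v_nv_{n-1}\cdots v_iv_1$, traversing the first block forward and the second in reverse, contradicting that $G$ is non-Hamiltonian. Formalising this, set $S=\{i:uv_i\in E(G)\}$ and $T=\{i:vv_{i-1}\in E(G)\}$; both lie in $\{2,3,\dots,n\}$, a set of size $n-1$, and the observation gives $S\cap T=\varnothing$. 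Since every neighbour of $u$ and of $v$ lies on the spanning path, $|S|=d(u)$ and $|T|=d(v)$, so disjointness yields $d(u)+d(v)=|S|+|T|\leq n-1$, contradicting $d(u)+d(v)\geq\sigma_2(G)\geq n$.

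The only genuinely non-routine step, and thus the main obstacle, is establishing $S\cap T=\varnothing$: one must hit on the correct index shift (pairing $v_i$ with $v_{i-1}$, not $v_{i+1}$) so that a simultaneous adjacency really does produce a spanning cycle. The remaining points are bookkeeping, where I would take care to confirm that the reversed cycle visits each vertex exactly once and uses only edges of $G$, and that the endpoint exclusions ($uv\notin E(G)$) keep $S$ and $T$ inside a common $(n-1)$-element index set.
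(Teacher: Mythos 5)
Your proposal is correct: the paper states Ore's theorem as a cited classical result (Theorem~\ref{thm 1.2}) without giving a proof, and your argument is precisely the standard one, via an edge-maximal non-Hamiltonian counterexample, the forced Hamilton $u$--$v$ path, and the disjointness of the index sets $S$ and $T$ inside $\{2,\dots,n\}$ yielding $d(u)+d(v)\leq n-1$. All steps, including the index shift pairing $v_i$ with $v_{i-1}$ and the verification that the rerouted cycle is a genuine Hamilton cycle of $G$, are sound.
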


\begin{thm}\label{thm A} (Bondy, \cite{Bondy2})
For a graph $G$ of order $n$, if $n\geq3$ and $\sigma_{2}(G)\geq n$ then $G$ is either pancyclic or else is the complete bipartite graph $K_{\frac{n}{2},\frac{n}{2}}$.
\end{thm}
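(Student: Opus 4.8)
The plan is to build on Theorem~\ref{thm 1.2}: since $\sigma_2(G)\ge n$, Ore's theorem already furnishes a Hamilton cycle $C=v_1v_2\cdots v_nv_1$, so $G$ certainly contains a cycle of length $n$. It then suffices to produce a cycle of every length $\ell$ with $3\le \ell\le n-1$, and the cleanest way to organise this is a downward induction: starting from $C$, I would extract a cycle of length $n-1$, then one of length $n-2$, and so on. The basic reduction is geometric. If $C_\ell=u_1u_2\cdots u_\ell u_1$ is a cycle of length $\ell\ge 4$ already found and $C_\ell$ has a chord $u_iu_{i+2}$ skipping exactly one vertex, then $u_1\cdots u_iu_{i+2}\cdots u_\ell u_1$ is a cycle of length $\ell-1$; so the search for all lengths reduces to finding such a ``short chord'' at each stage.

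First I would treat the generic step. Given a cycle $C_\ell$ with no chord of the form $u_iu_{i+2}$, the endpoints $u_i$ and $u_{i+2}$ are non-adjacent for every $i$, so the hypothesis $\sigma_2(G)\ge n$ forces $d(u_i)+d(u_{i+2})\ge n$. Combined with a count of how the neighbours of $u_i$ and $u_{i+2}$ are distributed around $C_\ell$, this large degree sum should either expose a different chord that still allows one vertex to be spliced out (giving the desired $(\ell-1)$-cycle), or else pin the graph down to a very rigid pattern. Tracking the neighbour positions along the whole cycle, and not merely the local pair $u_i,u_{i+2}$, is the mechanism that converts a local obstruction into global information.

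The one genuinely unavoidable obstruction is parity. In $K_{\frac{n}{2},\frac{n}{2}}$ a chord $u_iu_{i+2}$ would join two vertices on the same side of the bipartition, which are non-adjacent, so no short chord exists and the descent from an even length to the next odd length is impossible; this is exactly why $K_{\frac{n}{2},\frac{n}{2}}$, having no odd cycle, must appear as an exception. I would therefore show that the descent can stall only when $G$ is bipartite, and then close the argument directly: a Hamilton cycle forces the two parts $X,Y$ to be balanced, $|X|=|Y|=\frac{n}{2}$, and for any two vertices $x,x'$ in the same part (necessarily non-adjacent) the bound $d(x)+d(x')\ge n$ together with $d(x),d(x')\le \frac{n}{2}$ forces $d(x)=d(x')=\frac{n}{2}$; hence every vertex is joined to all of the opposite part and $G=K_{\frac{n}{2},\frac{n}{2}}$.

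I expect the main obstacle to be the structural rigidity in the middle step: proving that the \emph{only} way every method of removing a single vertex can fail is that $G$ is bipartite. Absence of a short chord on one particular cycle does not by itself make $G$ bipartite, so the delicate work is to propagate the non-adjacencies supplied by $\sigma_2(G)\ge n$ around the entire cycle and to rule out all ``near-miss'' configurations in which a single cycle length is absent while $G$ is not the balanced complete bipartite graph. Making the neighbourhood count tight enough to exclude every non-extremal graph, and handling cycles that arise only from combinations of several chords rather than from a single short chord, is where the argument demands the most care.
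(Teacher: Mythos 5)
Your closing paragraph already names the fatal problem, and it is not a technicality that more care would fix: the entire content of the theorem sits in the step you leave as a hope. The reduction ``the search for all lengths reduces to finding a short chord at each stage'' is false as stated. If a particular $\ell$-cycle $C_\ell$ has no chord $u_iu_{i+2}$, this neither precludes an $(\ell-1)$-cycle in $G$ (it may arise from a different $\ell$-cycle, or from chords spanning many vertices used in combination), nor does it force $G$ to be bipartite; so the descent can stall at a single stage on graphs that are neither extremal nor missing any cycle length. From the non-adjacency of $u_i$ and $u_{i+2}$ you get $d(u_i)+d(u_{i+2})\geq n$, but converting that degree sum into ``either some $(\ell-1)$-cycle exists or $G=K_{\frac{n}{2},\frac{n}{2}}$'' is exactly the theorem being proved, and your proposal covers it only with ``should either expose a different chord \ldots{} or else pin the graph down,'' which is a restatement of the goal, not an argument. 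The difficulty is genuine: even the first descent step (from $n$ to $n-1$) is known to admit exceptional non-bipartite configurations when one only assumes a large degree sum along the Hamilton cycle (cf.\ the Schmeichel--Hayes/Schmeichel--Hakimi line of cycle-structure results), so any correct proof must work globally with the distribution of all neighbours around the Hamilton cycle for each target length $\ell$ separately, rather than splicing one vertex out of a previously found cycle.

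For comparison: the paper itself gives no proof of Theorem~\ref{thm A} (it is quoted from Bondy \cite{Bondy2}), and Bondy's actual argument is structured quite differently. He first proves the edge-count version --- a Hamiltonian graph with $e(G)\geq \frac{n^{2}}{4}$ is pancyclic or equals $K_{\frac{n}{2},\frac{n}{2}}$ --- by fixing a Hamilton cycle $v_1v_2\cdots v_nv_1$ and, for a hypothetically missing cycle length $\ell$, pairing each edge $v_iv_j$ with a forced non-edge (adjacency $v_iv_j$ forbids a specific second chord that would close an $\ell$-cycle), which bounds $e(G)\leq \frac{n^{2}}{4}$ and lets the equality case be analysed; the Ore-type hypothesis $\sigma_2(G)\geq n$ enters only to guarantee $e(G)\geq \frac{n^{2}}{4}$ via a short minimum-degree count. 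Your bipartite endgame, by contrast, is correct and matches the standard one: a Hamilton cycle forces $|X|=|Y|=\frac{n}{2}$, and for same-side (hence non-adjacent) $x,x'$ the bounds $d(x)+d(x')\geq n$ and $d(x),d(x')\leq \frac{n}{2}$ force all degrees to equal $\frac{n}{2}$, so $G=K_{\frac{n}{2},\frac{n}{2}}$. But as submitted, the proposal proves only Ore's theorem plus this extremal identification; the pancyclicity core is missing.
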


\begin{thm}\label{thm 1.3} (El-Zahar, \cite{Zahar})
Let $n_{1},n_{2}$ be two integers with  $n_{1}, n_{2}\geq3$ and $G$ a graph of order $n$ with  $n=n_{1}+ n_{2}$. If $\delta(G)\geq \left\lceil\frac{n_{1}}{2}\right\rceil+\left\lceil\frac{n_{2}}{2}\right\rceil$ then $G$ has two disjoint cycles of length $n_{1}$ and $n_{2}$, where,  for a real number $r$, $\lceil r \rceil$ is the least integer not less than $r$.
\end{thm}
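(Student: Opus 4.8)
The plan is to prove Theorem~\ref{thm 1.3} by an extremal argument on a pair of disjoint cycles, rather than by trying to guess a good vertex partition directly; a naive partition fails, since after deleting a prospective $n_1$-cycle the remaining $n_2$ vertices need not inherit any Dirac/Ore condition. Throughout I would assume without loss of generality that $3\le n_1\le n_2$, and note that the hypothesis gives $\delta(G)\ge\lceil n_1/2\rceil+\lceil n_2/2\rceil\ge n/2$, with the smallest case $n_1=n_2=3$ already forcing $\delta(G)\ge 4$. In particular $G$ has at least two vertex-disjoint cycles; among all pairs $(C_1,C_2)$ of disjoint cycles in $G$ I would fix one that is extremal in the following lexicographic sense: first maximize $|V(C_1)|+|V(C_2)|$, and subject to that minimize $\big||V(C_1)|-n_1\big|$. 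The goal is then to show that any such extremal pair already satisfies $|V(C_1)|=n_1$ and $|V(C_2)|=n_2$.

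First I would show the extremal pair is spanning, i.e.\ $|V(C_1)|+|V(C_2)|=n$. Suppose a vertex $x$ lies outside $C_1\cup C_2$. Writing $u^{+},u^{-}$ for the successor and predecessor of a vertex $u$ along its cycle, the point is that if $x$ has two cyclically consecutive neighbours $u,u^{+}$ on some $C_i$, then replacing the edge $uu^{+}$ by the path $uxu^{+}$ lengthens $C_i$ and contradicts maximality of $|V(C_1)|+|V(C_2)|$. Hence $x$ has no two consecutive neighbours on either cycle, which caps its neighbours on $C_i$ at $\lfloor|V(C_i)|/2\rfloor$; combining this with $d(x)\ge\lceil n_1/2\rceil+\lceil n_2/2\rceil$ and accounting for the still-uncovered vertices yields a contradiction. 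The ceilings in the degree bound are exactly what make this counting close, with the floor losses on the two cycles compensated.

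The crux is the length-balancing step. Assuming the pair is spanning but $|V(C_1)|=m_1>n_1$, hence $|V(C_2)|=m_2<n_2$, I would try to transfer mass from $C_1$ to $C_2$: to move a vertex $v\in V(C_1)$ I need (a) its cycle-neighbours to satisfy $v^{-}v^{+}\in E(G)$, so that $C_1-v$ closes up into an $(m_1-1)$-cycle, and (b) two consecutive vertices $w,w^{+}$ of $C_2$ both adjacent to $v$, so that $v$ can be spliced into $C_2$ to form an $(m_2+1)$-cycle. Such a move keeps the pair spanning while pushing $|V(C_1)|$ closer to $n_1$, contradicting the lexicographic choice. The theorem thus reduces to proving that the degree condition forces some vertex $v$ witnessing both (a) and (b), and I expect this to be the main obstacle: a single vertex need not admit the chord in (a), so one is typically forced into transferring a whole path segment between the cycles and into a delicate case analysis on the adjacency pattern between $V(C_1)$ and $V(C_2)$.

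Finally, I would expect the genuine difficulties to concentrate in (i) the degenerate short-cycle cases, where $m_2$ or a target length is near $3$ and the splicing/contracting operations can fail, and (ii) the parity bookkeeping: the summand $\lceil n_1/2\rceil+\lceil n_2/2\rceil$ is what makes the counting inequalities tight, so the argument must track whether $n_1,n_2$ are odd or even and exploit that an extremal pair admits no improving transfer. It is precisely in the ``both odd'' regime, where the bound reads $\delta(G)\ge n/2+1$, that the hypothesis is strongest and the transfer arguments have the most slack, consistent with the paper's remark that its $\sigma_2(G)\ge n+2$ result recovers this case.
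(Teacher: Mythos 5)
The paper does not prove Theorem~\ref{thm 1.3} at all: it is quoted from El-Zahar \cite{Zahar} as a known result, so your attempt must be judged on its own terms, and as written it is a strategy outline rather than a proof, with two genuine gaps. First, the spanning step does not close. Write $m_i=|V(C_i)|$ and let $R=V(G)\setminus(V(C_1)\cup V(C_2))$ with $|R|=r\ge1$. If $x\in R$ has no two cyclically consecutive neighbours on either cycle, your count gives
$$d(x)\;\le\;\left\lfloor\tfrac{m_1}{2}\right\rfloor+\left\lfloor\tfrac{m_2}{2}\right\rfloor+(r-1)\;\le\;\tfrac{n-r}{2}+r-1\;=\;\tfrac{n}{2}+\tfrac{r-2}{2},$$
which contradicts $\delta(G)\ge n/2$ only when $r=1$ (and, when $n_1,n_2$ are both odd so that $\delta(G)\ge n/2+1$, only when $r\le3$). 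For larger $r$ the neighbours of $x$ inside $R$ swamp the floor losses on the two cycles and no contradiction follows; the claim that ``accounting for the still-uncovered vertices yields a contradiction'' is false as stated. Handling $r\ge2$ requires an additional idea — for instance taking a longest path in $G[R]$ and arguing about its endvertex, whose neighbours in $R$ all lie on that path — and even at $r=2$ with $n_1,n_2$ even your inequality is tight, so the equality configuration must be analyzed and eliminated, which you do not do.

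Second, and more seriously, the balancing step — the actual content of El-Zahar's theorem — is left unproven, as you yourself concede by calling it ``the main obstacle.'' Nothing in the degree hypothesis forces any vertex $v\in V(C_1)$ to satisfy $v^-v^+\in E(G)$, so the single-vertex transfer satisfying your conditions (a) and (b) simultaneously can genuinely fail to exist, and the segment-transfer case analysis you gesture at (transferring a path of $C_1$ into $C_2$ and reclosing both cycles) is precisely where the published proof spends its effort, including the short-cycle degeneracies and parity bookkeeping you correctly flag. A proposal that reduces the theorem to ``prove that the degree condition forces a vertex witnessing (a) and (b)'' without proving it has identified the right extremal framework — maximize $|V(C_1)|+|V(C_2)|$, then optimize the length split, which is indeed in the spirit of El-Zahar's argument — but has not supplied the proof at its crux. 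So the attempt is an incomplete plan: correct in its setup and in locating the difficulties, but with the spanning count broken for $r\ge2$ and the central exchange argument missing.
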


The degree condition in Theorem~\ref{thm 1.3} is sharp since the complete bipartite graph $K_{n/2,n/2}$ does not have any odd cycle. In general, El-Zahar posed the following conjecture:

\begin{Conjecture}\label{Conjecture 1.1}(El-Zahar, \cite{Zahar})
Let $G$ be a graph of order $n=n_{1}+ n_{2}+\cdots +n_{k}$ with $n_{i}\geq 3$ for each $i\in\{1,2,\cdots,k\}$. If $\delta(G)\geq \left\lceil\frac{n_{1}}{2}\right\rceil+\left\lceil\frac{n_{2}}{2}\right\rceil$$+\cdots + \left\lceil\frac{n_{k}}{2}\right\rceil$ then $G$ has $k$ disjoint cycles of length $n_{1}$, $n_{2},\cdots,n_{k}$.
\end{Conjecture}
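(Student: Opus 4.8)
The natural first move is induction on $k$, with El-Zahar's two-cycle result (Theorem~\ref{thm 1.3}) as the base step. Assuming the conjecture for $k-1$, one would like to locate a single cycle $C$ of length $n_k$ so that the leftover graph $G'=G-V(C)$ still satisfies the hypothesis for the remaining lengths, i.e. $\delta(G')\ge\sum_{i=1}^{k-1}\lceil n_i/2\rceil$. The immediate difficulty, and the reason the conjecture is hard, is that deleting the $n_k$ vertices of $C$ can depress the minimum degree by as much as $n_k$, whereas the budget only permits a drop of $\lceil n_k/2\rceil$. Hence $C$ cannot be an arbitrary cycle of the correct length: each of its vertices must send at most roughly half its edges into $C$ itself, so $C$ must be \emph{degree-efficient} with respect to the rest of the graph. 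Producing such a balanced cycle is the crux.

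To control this I would split into two regimes according to how close $G$ is to the extremal configuration. The tightness example is the balanced complete bipartite graph $K_{\lceil n/2\rceil,\lfloor n/2\rfloor}$, whose minimum degree $\lfloor n/2\rfloor$ falls just short of $\sum_{i=1}^{k}\lceil n_i/2\rceil$ precisely when some $n_i$ is odd; since this graph has no odd cycle, the bound is both sharp and pinpoints bipartiteness as the sole obstruction. In the extremal regime, where $G$ lies within $o(n^2)$ edges of such a bipartition $(A,B)$, I would argue directly: the sparse edges inside $A$ and inside $B$ are the only places an odd cycle can turn around, so I would route each odd $C_i$ through one such chord and let the even cycles alternate between $A$ and $B$, balancing the two sides by a counting argument driven by the surplus $\sum_{i=1}^{k}\lceil n_i/2\rceil-\lfloor n/2\rfloor$.

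In the non-extremal regime I would deploy the regularity method. Applying Szemer\'edi's Regularity Lemma yields an $\varepsilon$-regular partition whose reduced graph $R$ inherits a minimum-degree bound near $\tfrac12|R|$, so that $R$ is highly connected and carries an almost-spanning system of super-regular pairs. Each prescribed length $n_i$ is then realised inside such a pair by the Blow-up Lemma, with short paths through $R$ used both to tune the vertex count to the exact value $n_i$ and to keep the $k$ cycles vertex-disjoint; the arithmetic of how many vertices each cycle draws from the denser cluster is what reproduces the threshold $\sum_{i=1}^{k}\lceil n_i/2\rceil$. This is the route by which the conjecture is currently known for all sufficiently large $n$.

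The step I expect to be the genuine obstacle is the balanced-cycle extraction underlying both regimes: guaranteeing a cycle of the \emph{exact} length $n_k$ whose deletion wastes no degree. In the regularity approach this is hidden in the simultaneous choice of super-regular structure for all $k$ lengths at once, and in the extremal approach it becomes a delicate parity and counting argument on the two sides of the near-bipartition. Because plain induction cannot dodge the degree drop, I do not expect a short argument valid for every $n$; the plan above delivers the conjecture for large $n$, leaving small orders to a separate and essentially ad hoc treatment.
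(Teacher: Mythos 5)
There is a fundamental mismatch here: the statement you are addressing is labelled a \emph{Conjecture} in the paper, and the paper contains no proof of it. It records only that Abbasi confirmed it for sufficiently large graphs via the regularity lemma, that Wang settled the special case $n_1=\cdots=n_k=4$, and that ``in general, the conjecture still remains open.'' Measured against that, your text is a research programme, not a proof, and the gaps you yourself flag are exactly the ones that keep the conjecture open. The inductive extraction of a single ``degree-efficient'' $n_k$-cycle is never carried out --- you correctly observe that deleting an arbitrary $n_k$-cycle can cost a vertex up to $n_k$ in degree against a budget of $\left\lceil n_k/2\right\rceil$, name this as the crux, and then supply no construction that overcomes it. The extremal regime is likewise only gestured at: ``route each odd $C_i$ through one such chord'' and ``a counting argument driven by the surplus'' conceal the real difficulty, namely realising $k$ cycles of \emph{exact} prescribed lengths simultaneously while balancing both sides of a near-bipartition, where the surplus can be as small as the number of odd $n_i$ and one misplaced chord wrecks the parity bookkeeping for every subsequent cycle.

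Your non-extremal regime (regularity partition, reduced graph, Blow-up Lemma) is in substance Abbasi's argument, which the paper already cites; at best the plan would reprove the known sufficiently-large-$n$ case, and you concede that small orders would need a ``separate and essentially ad hoc treatment'' that you do not provide. So the proposal proves the conjecture for no explicit range of $n$ and cannot stand in for a proof of Conjecture~\ref{Conjecture 1.1}. For calibration: the paper's actual contribution, Theorem~\ref{thm 1.5}, is the $k=2$ Ore-type analogue under $\sigma_2(G)\geq n+2$, established by elementary partition-exchange arguments (Lemma~\ref{lem 2.6} and the three claims of Section~3) with no regularity machinery; nothing in the paper bears on $k\geq 3$, and your sketch does not change that state of affairs.
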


In \cite{Abbasi}, Abbasi confirmed the conjecture for sufficiently large graphs by using the regularity lemma. For the special case when $n_{1}=n_{2}=\cdots=n_{k}=4$, the conjecture was posed  earlier by Erd\H{o}s \cite{E} and was proved later by Wang \cite{Wang}. In general, the conjecture still remains open. Recently, instead of degree condition, Yan et. al \cite{Yan} considered the problem from the view point of degree sum condition and proved the following result:

\begin{thm}\label{thm 1.4} (Yan. et.al,\cite{Yan})
Let $G$ be a graph on $n$ vertices. For any two integers $n_{1}$ and $n_{2}$ with  $n_{1}, n_{2}\geq3$ and $n=n_{1}+ n_{2}$, if $\sigma_{2}(G)\geq n+4$, then $G$ has two disjoint cycles of length $n_{1}$ and $n_{2}$.
\end{thm}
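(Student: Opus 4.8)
The plan is to recast the statement as a vertex-partition problem and then drive an extremal (maximum internal-edge) argument with the degree-sum hypothesis. Since $n_1+n_2=n$, two vertex-disjoint cycles of lengths $n_1$ and $n_2$ together span $V(G)$; hence the assertion is equivalent to finding a partition $V(G)=V_1\cup V_2$ with $|V_1|=n_1$, $|V_2|=n_2$ such that each induced subgraph $G[V_i]$ has a Hamilton cycle. By Ore's theorem (Theorem~\ref{thm 1.2}) it suffices to produce such a partition in which each part satisfies the Ore condition $\sigma_2(G[V_i])\ge n_i$; equivalently, $G[V_i]$ contains no \emph{bad pair}, meaning a pair of non-adjacent $u,v\in V_i$ with $d_{V_i}(u)+d_{V_i}(v)\le n_i-1$, where $d_{V_i}(x)=|N(x)\cap V_i|$. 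As a preliminary remark, $\sigma_2(G)\ge n+4>n$ together with $\sigma_2(K_{n/2,n/2})=n$ shows $G\ne K_{n/2,n/2}$, so by Bondy's theorem (Theorem~\ref{thm A}) $G$ is pancyclic; in particular single cycles of lengths $n_1$ and $n_2$ already exist, which is a useful sanity check and will help in the degenerate cases.

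Among all partitions with $|V_1|=n_1$ and $|V_2|=n_2$, I would fix one maximizing the internal-edge count $\Phi=e(G[V_1])+e(G[V_2])$ (equivalently minimizing the cut). The goal is to show that such an extremal partition has no bad pair in either part. Suppose for contradiction that $G[V_1]$ has a bad pair $u,v$. The hypothesis converts low internal degree into high external degree:
\[
d_{V_2}(u)+d_{V_2}(v)=\big(d_G(u)+d_G(v)\big)-\big(d_{V_1}(u)+d_{V_1}(v)\big)\ge (n+4)-(n_1-1)=n_2+5 .
\]
Since $|V_2|=n_2$, inclusion--exclusion gives at least $5$ common neighbours of $u,v$ in $V_2$, and the member of the pair with smaller $V_1$-degree, say $u$, has many neighbours in $V_2$ and few in $V_1$. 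The idea is then to \emph{swap}: move $u$ into $V_2$ and bring a suitable $x\in V_2$ into $V_1$, keeping the sizes. Writing $a=1$ if $ux\in E(G)$ and $a=0$ otherwise, a direct count yields
\[
\Phi'-\Phi=\big(d_{V_2}(u)-d_{V_1}(u)\big)-\big(d_{V_2}(x)-d_{V_1}(x)\big)-2a .
\]
Because $d_{V_2}(u)-d_{V_1}(u)$ is large and positive, one expects to choose $x$ (e.g.\ a vertex with comparatively few neighbours inside $V_2$) making the right-hand side positive, contradicting maximality of $\Phi$.

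The crux, and the step I expect to be hardest, is making this exchange airtight. First, one must guarantee a vertex $x\in V_2$ for which the displayed quantity is strictly positive; if instead every $x\in V_2$ has $d_{V_2}(x)\gg d_{V_1}(x)$, then $V_2$ is internally dense and weakly joined to $V_1$, and a separate argument is needed (that $G[V_2]$ already has no bad pair, or that moving $v$, or a common neighbour, improves $\Phi$). Second, repairing $G[V_1]$ could create a bad pair in $G[V_2]$, so $\Phi$ must be shown to be the correct global monotone potential, and the case in which \emph{both} parts fail the Ore condition must be handled. Third, the tightness of the hypothesis surfaces exactly here: the ``$+4$'' produces a slack of $5$ common neighbours above, and this slack is what should make the exchange robust, whereas the borderline configurations---near-bipartite graphs close to $K_{n/2,n/2}$, and the small cases $n_1=3$ with various parities of $n_2$---are precisely where the argument is most delicate and likely need hands-on treatment, possibly invoking pancyclicity of $G$ or El-Zahar's theorem (Theorem~\ref{thm 1.3}) on a part whose minimum degree can be controlled.

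As a complementary route for the residual cases, I would instead start from two disjoint cycles furnished by pancyclicity and a maximality argument for $|V(C_1)|+|V(C_2)|$, first securing a spanning pair with $|V(C_1)|+|V(C_2)|=n$, and then \emph{transfer} vertices between $C_1$ and $C_2$ along cross edges and chords to adjust the lengths to exactly $n_1$ and $n_2$. The length-adjustment is again the main obstacle: each transfer of a segment from the longer cycle to the shorter one must be realized by an insertion that keeps both objects cycles, which is controlled by the same surplus of cross edges that $\sigma_2(G)\ge n+4$ supplies. Either way, the single genuinely hard ingredient is the local exchange/insertion lemma, and the degree-sum slack is exactly what powers it.
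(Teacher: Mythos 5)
The statement you were asked to prove is quoted by the paper from Yan et al.\ and is not reproved there; the paper instead proves the stronger Theorem~\ref{thm 1.5} ($\sigma_{2}(G)\geq n+2$), so the right benchmark is its machinery (Lemma~\ref{lem 2.6} together with Claims 1--3 of Section 3). Your overall frame --- an extremal partition maximizing $\Phi=e(G[V_{1}])+e(G[V_{2}])$ plus an exchange argument --- is indeed the frame used there, but your proposal has a genuine gap at exactly the point you yourself flag as the crux: the single-vertex swap aimed at securing Ore's condition in \emph{both} parts is neither proved nor provable as stated. First, maximality of $\Phi$ does not force the parts to satisfy Ore's condition: the paper's Lemma~\ref{lem 2.6} shows that even with careful bookkeeping the extremal partition can land in exceptional structures such as $K_{m+2,m+1}\subseteq G[V_{2}]\subseteq K_{m+2}+(m+1)K_{1}$ or $K_{3}+(K_{p}\cup K_{q})$, and these are eliminated by separate structural analyses (via Lemma~\ref{lem 2.7}), not by any exchange. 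Second, your exchange inequality has no guaranteed positive term: from a bad pair you control only the sum
$\bigl(d_{V_{2}}(u)-d_{V_{1}}(u)\bigr)+\bigl(d_{V_{2}}(v)-d_{V_{1}}(v)\bigr)\geq n_{2}-n_{1}+6$,
which is negative when $n_{1}>n_{2}+6$, and $\sigma_{2}(G)$ gives no lower bound on an individual vertex's difference $d_{V_{2}}(u)-d_{V_{1}}(u)$; moreover, when $G[V_{2}]$ is internally dense, every candidate $x\in V_{2}$ has $d_{V_{2}}(x)-d_{V_{1}}(x)$ large, so no single-vertex swap increases $\Phi$ --- you acknowledge this case but supply no argument for it.

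What makes the exchange airtight in the paper (and in Yan et al.'s original proof) --- and what your proposal is missing --- is a \emph{pair} swap with traceability, not the Ore condition, as the invariant: one maintains that each part has a Hamilton path, takes the endvertices $s,t$ of a Hamilton path of a non-Hamiltonian part, gets $d_{G[V_{1}]}(s)+d_{G[V_{1}]}(t)\leq n_{1}-1$ from Lemma~\ref{lem 2.1}, hence $e(\{s,t\},V_{2})\geq n_{2}+5$ under $\sigma_{2}(G)\geq n+4$, and then invokes Wang's absorption lemma (Lemma~\ref{lem 2.3}) to move the pair across while keeping a Hamilton path in the enlarged part, gaining at least two internal edges (this is precisely the displayed computation in the proof of Lemma~\ref{lem 2.6}). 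The pairing is essential because the hypothesis constrains degree sums of non-adjacent \emph{pairs} only, and Lemma~\ref{lem 2.3} is the device that keeps the swapped part traceable; your plan has no analogue of it, nor of the cycle-closing steps (Lemmas~\ref{lem 2.1}, \ref{lem 2.2}, \ref{lem 2.4} and Propositions 2--3) needed to upgrade Hamilton paths to the two required cycles. Your fallback route (grow two disjoint spanning cycles and transfer segments) is stated only as intent, with the insertion lemma it depends on left unproved. In short, the skeleton matches the paper's, but the load-bearing steps are absent, and the specific target (both parts Ore) is strictly stronger than what the extremal argument can deliver.
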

 As an extension of two disjoint cycles with specified lengths, Kostochka and Yu \cite{Kostochka} showed that if  $\sigma_{2}(G)\geq \frac{4}{3}n-1$ (not sharp in general) then the graph $G$ contains every 2-factor. On the other hand, the sharpness  of Theorem~\ref{thm 1.3} implies that the degree sum $\sigma_{2}(G)$ is at least $n+2$ for Theorem~\ref{thm 1.4}. Even so,  Yan et. al \cite{Yan} pointed that it might not be easy to get a better degree sum condition than the one in Theorem~\ref{thm 1.4} and therefore, posed the following question.

\noindent{\bf Question} \label{Problem 1.1} (Yan. et.al, \cite{Yan}).
 Let $G$ be a graph of order $n=n_{1}+ n_{2}$ with  $n_{1}, n_{2}\geq3$. Can $\sigma_{2}(G)\geq n+2$ guarantee that $G$ has two disjoint cycles of length $n_{1}$ and $n_{2}$?

In this paper, we give  a positive answer to this question.

\begin{thm}\label{thm 1.5}
Let $n_{1},n_{2}$ be two integers with  $n_{1}, n_{2}\geq3$ and $G$ a graph of order $n$ with  $n=n_{1}+ n_{2}$. If $\sigma_{2}(G)\geq n+2$ then $G$ has two disjoint cycles of length $n_{1}$ and $n_{2}$.
\end{thm}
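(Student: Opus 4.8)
The plan is to reduce the statement to a partition problem and then drive a contradiction through an extremal choice of cycle together with a rotation--exchange argument. Without loss of generality assume $3\le n_1\le n_2$. Since $\sigma_2(G)\ge n+2>n$ and the complete bipartite graph $K_{n/2,n/2}$ has $\sigma_2=n<n+2$, Theorem~\ref{thm A} forces $G$ to be pancyclic; in particular $G$ has a cycle of length $n_1$. Moreover $\sigma_2(G)\ge n+1$, so by the classical Ore-type condition for Hamilton-connectedness $G$ is Hamilton-connected, which I will use to supply rich path structure inside the pieces I build. The key observation is that producing two disjoint cycles of lengths $n_1$ and $n_2$ with $n_1+n_2=n$ is equivalent to partitioning $V(G)$ into $A$ and $B$ with $|A|=n_1$, $|B|=n_2$ such that each of $G[A]$ and $G[B]$ is Hamiltonian.

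Suppose for contradiction that no such partition exists. Among all cycles $C$ of length $n_1$ in $G$, I would choose one for which the subgraph $H:=G-V(C)$ admits a longest cycle $Q$ that is as long as possible, breaking ties by maximizing $|E(H)|$. Because the partition fails, $|Q|\le n_2-1$, so some vertex of $H$ lies off $Q$. The workhorse estimate is that for nonadjacent $u,v\in V(H)$ one has $d_H(u)+d_H(v)\ge (n+2)-|N(u)\cap V(C)|-|N(v)\cap V(C)|$, so a vertex $v\in V(H)$ with small $d_H(v)$ must have almost all of $V(C)$ in its neighbourhood. In particular a vertex of $H$ missed by $Q$ is forced to be heavily joined to $C$, which is the leverage I exploit next.

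The heart of the argument, and the step I expect to be the main obstacle, is the exchange procedure. Using the forced adjacency above, I would replace a vertex $w$ of $C$ (chosen to have few neighbours on $C$) by a suitable vertex $x\in V(H)$ adjacent to both cycle-neighbours of $w$, producing a new $n_1$-cycle $C'$ while $w$ re-enters the complement; the degree condition should be arranged so that the new complement has a strictly longer cycle or strictly more edges, contradicting the extremal choice. Standard rotations along $Q$ and along the Hamilton paths supplied by Hamilton-connectedness are what create the adjacencies needed to perform such a swap. The difficulty is precisely the near-extremal configurations in which every available swap fails to improve the extremal quantity: isolating these configurations and excluding them is where the full strength of $\sigma_2\ge n+2$, rather than merely $n+1$, is consumed.

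Finally I would dispose of the residual tight cases according to the parity of $n_1$ and $n_2$. When at least one of $n_1,n_2$ is even the extremal analysis should close directly, whereas when both are odd the surviving configuration matches El-Zahar's extremal (complete-bipartite-type) graph, and the hypothesis $\sigma_2\ge n+2$ — which in this regime is exactly $\delta(G)\ge\lceil n_1/2\rceil+\lceil n_2/2\rceil$ — rules it out, simultaneously recovering Theorem~\ref{thm 1.3} for odd $n_1,n_2$. Combining the contradictions obtained from the exchange argument with the parity analysis shows that the desired partition, and hence the two disjoint cycles of lengths $n_1$ and $n_2$, must exist.
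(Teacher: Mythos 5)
Your proposal is a programme, not a proof, and the gap sits exactly where you flag it yourself: the exchange step is never executed. You choose an $n_1$-cycle $C$ maximizing the circumference of $H=G-V(C)$ (tie-break by $|E(H)|$), note that a vertex missed by the long cycle $Q$ must be heavily joined to $C$, and then assert that a swap of some $w\in V(C)$ with some $x\in V(H)$ ``should be arranged'' to increase the extremal quantity. But nothing guarantees this: $w$ may have few neighbours in $H$, so after the swap both the circumference of $H'=H-x+w$ and $|E(H')|$ can drop, and the configurations where every swap stalls are precisely the near-bipartite extremal graphs (complements of type $K_{m+2}+(m+1)K_1$ or $K_3+(K_p\cup K_q)$, where deleting two independent vertices destroys Hamiltonicity). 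Saying that ``isolating these configurations and excluding them is where the full strength of $\sigma_2\geq n+2$ is consumed'' concedes that the entire content of the theorem is missing: the paper's proof is, in essence, nothing but that analysis. Two further loose ends: your sketch gives no argument for the small cases $n_1\in\{3,4\}$ (the paper treats these separately in Proposition 1, since its structural lemmas require $n_1,n_2\geq 5$), and your parity remark misstates the relation between the hypotheses --- $\sigma_2(G)\geq n+2$ is \emph{implied by} $\delta(G)\geq\lceil n_1/2\rceil+\lceil n_2/2\rceil$ when $n_1,n_2$ are odd (via $\sigma_2\geq 2\delta$), not ``exactly'' equal to it, and at any rate Theorem~\ref{thm 1.3} cannot be invoked to dispose of a residual case of a statement that is supposed to generalize it.

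For comparison, the paper's extremal object is different and is what makes the analysis tractable: it maximizes $e(W_1)+e(W_2)$ over partitions $(W_1,W_2)$ with $|W_i|=n_i$ in which each $G[W_i]$ merely contains a \emph{Hamilton path}, then moves the two endvertices of the non-closable path across the partition to obtain $|V_1|=n_1-2$, $|V_2|=n_2+2$ with $\sigma_2(G[V_2])\geq n_2+3$ (Lemma~\ref{lem 2.6}, using Lemma~\ref{lem 2.5} and a two-vertex exchange against the maximality of $e(W_1)+e(W_2)$). The point of the shifted partition is that $G[V_2]-\{x,y\}$ then satisfies $\sigma_2\geq |V(H)|-1$, so the Ainouche--Christofides/Jung/Nara/Schmeichel classification (Lemma~\ref{lem 2.7}) pins the failure configurations down to the two explicit families above, which are then eliminated one by one in Claims 1--3 of Section 3. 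Your rotation--exchange framework could conceivably be pushed through, but you would have to rediscover this classification (or an equivalent) to handle the stalled swaps; as written, the proposal establishes only the easy preliminary reductions.
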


 \section{Lemmas}

In order to prove the main theorem, in this section we introduce some necessary lemmas.

Let $W$ and $W'$ be two disjoint subsets of $V(G)$. We denote  by  $e(W,W')$ (resp., $e(W)$) the number of the edges in $G$ that lie between $W$ and $W'$ (resp., lie in $G[W]$), where $G[W]$ denotes the subgraph of $G$ induced by $W$. The graph obtained from $G$ by removing the vertices in $W$ is denoted by $G-W$. In particular, if $W$ consists of a single vertex $w$, then we simply write $e(\{w\},W')$ and $G-\{w\}$ as $e(w,W')$ and $G-w$, respectively.

For a cycle $C$ in $G$, we always give a direction on $C$ and use $C^{-}$ to denote the cycle $C$ with the opposite direction. For a vertex $v\in V(C)$, we use $v^{i-}$ and $v^{i+}$ to represent the $i$-th predecessor and $i$-th successor of $v$  along the direction of $C$, respectively. For simplicity, we write $v^{-}$ and $v^{+}$ instead of $v^{1-}$ and $v^{1+}$, respectively. For two vertices $u,v\in V(C)$, we denote by $C[u,v]$ the section of $C$ from $u$ to $v$ along the direction of $C$.

\begin{lem}\label{lem 2.1}\cite{Ore2}
 Let $u$ and $v$ be the two endvertices of a Hamilton path in a graph $G$ of order $n$. If $d(u)+d(v)\geq n$, then $G$ is Hamiltonian.
 \end{lem}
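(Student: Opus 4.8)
The plan is to use the classical crossing-chords counting argument on the given Hamilton path. Fix a Hamilton path $P=v_{1}v_{2}\cdots v_{n}$ with $u=v_{1}$ and $v=v_{n}$, so the path edges are $v_{i}v_{i+1}$ for $1\le i\le n-1$. If $uv\in E(G)$ then $P$ together with the edge $v_{1}v_{n}$ is already a Hamilton cycle, so from now on the point is to handle $uv\notin E(G)$ by finding a pair of ``crossing'' chords that can be recombined with $P$ into a spanning cycle.

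Next I would encode the neighbours of the two endvertices as index sets along $P$. Define
$$I=\{\,i:2\le i\le n,\ v_{1}v_{i}\in E(G)\,\}\qquad\text{and}\qquad J=\{\,i:2\le i\le n,\ v_{i-1}v_{n}\in E(G)\,\}.$$
Every neighbour of $u=v_{1}$ lies in $\{v_{2},\dots,v_{n}\}$, so $|I|=d(u)$; and the map $v_{i-1}\mapsto i$ is a bijection from the neighbours of $v=v_{n}$ (all lying in $\{v_{1},\dots,v_{n-1}\}$) onto $J$, so $|J|=d(v)$. Both $I$ and $J$ are subsets of the $(n-1)$-element set $\{2,3,\dots,n\}$.

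The heart of the argument is a single pigeonhole count. Since $|I|+|J|=d(u)+d(v)\ge n>n-1$, inclusion--exclusion gives $|I\cap J|\ge |I|+|J|-(n-1)\ge 1$, so there is an index $i\in I\cap J$. For this $i$ we have both chords $v_{1}v_{i}\in E(G)$ and $v_{i-1}v_{n}\in E(G)$ available simultaneously, and I would then exhibit the Hamilton cycle
$$v_{1}v_{2}\cdots v_{i-1}\,v_{n}v_{n-1}\cdots v_{i}\,v_{1},$$
which runs $v_{1},\dots,v_{i-1}$ forward along $P$, jumps to $v_{n}$ through the chord $v_{i-1}v_{n}$, runs $v_{n},\dots,v_{i}$ backward along $P$, and closes up through the chord $v_{i}v_{1}$; every vertex is visited exactly once.

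The only delicate point, and where the bookkeeping must be watched, is the index shift built into $J$: it is chosen precisely so that one common index $i$ yields two chords whose endpoints cut $P$ into exactly the two arcs $v_{1}\cdots v_{i-1}$ and $v_{i}\cdots v_{n}$ that the reversal above glues into a single cycle. Establishing the bijection that gives $|J|=d(v)$, and noting that the boundary indices $i=2$ and $i=n$ both force $uv\in E(G)$ (consistent with the already-settled case), are the small checks that finish the proof. I do not expect any genuine obstacle beyond this careful indexing, since the degree-sum hypothesis feeds directly into the pigeonhole bound.
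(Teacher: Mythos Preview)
Your argument is correct and is exactly the classical crossing-chords proof of this lemma. The paper itself does not supply a proof here; Lemma~2.1 is quoted from Ore \cite{Ore2} as a known result, so there is no in-paper argument to compare against, and your write-up is the standard one.
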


\begin{lem}\label{lem 2.2}\cite{Ore2}
 Let $C$ be a Hamilton cycle of a graph $G$ of order $n$  with a given direction and let $u$ and $v$ be two vertices on $C$. If $d_{G}(u^{+})+d_{G}(v^{+})\geq n+1 $ or $d_{G}(u^{-})+d_{G}(v^{-})\geq n+1 $, then $G$ contains a Hamilton path with endvertices $u$ and $v$.
\end{lem}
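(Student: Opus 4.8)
The plan is to argue by contradiction, driving a pigeonhole count on the neighbourhoods of $u^{+}$ and $v^{+}$ with two explicit ``crossing-chord'' reroutings of the Hamilton cycle $C$. First I would reduce to a single case: reversing the direction of $C$ interchanges successors and predecessors, so the hypothesis $d(u^{-})+d(v^{-})\geq n+1$ is exactly the successor hypothesis applied to $C^{-}$; hence it suffices to treat $d(u^{+})+d(v^{+})\geq n+1$. I would also dispose of the degenerate positions immediately: if $v=u^{+}$ or $v=u^{-}$, deleting a single edge of $C$ already exhibits a Hamilton path with endvertices $u,v$. So I write $C=c_{1}c_{2}\cdots c_{n}c_{1}$ with $c_{i}^{+}=c_{i+1}$ (indices mod $n$), $u=c_{1}$, $v=c_{m}$ with $3\leq m\leq n-1$, making $u,\,u^{+}=c_{2},\,v=c_{m},\,v^{+}=c_{m+1}$ four distinct vertices, and I split $V(G)$ into the two arcs $A=\{c_{2},\dots,c_{m}\}$ and $B=\{c_{m+1},\dots,c_{n},c_{1}\}$.

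The engine is two rerouting templates that refine the following base picture: if $u^{+}v^{+}\in E(G)$, then deleting $uu^{+}$ and $vv^{+}$ and inserting $u^{+}v^{+}$ produces the Hamilton path $C^{-}[u,v^{+}]$ followed by the chord $v^{+}u^{+}$ followed by $C[u^{+},v]$. The general templates insert a chord from $u^{+}$ into the interior of an arc. For an index $i$ with $c_{i}\in B$, if $u^{+}c_{i}\in E(G)$ and $v^{+}c_{i+1}\in E(G)$, then deleting $uu^{+},\,vv^{+},\,c_{i}c_{i+1}$ and inserting $u^{+}c_{i},\,v^{+}c_{i+1}$ yields a Hamilton path from $u$ to $v$; symmetrically, for $c_{i}\in A$, if $u^{+}c_{i}\in E(G)$ and $v^{+}c_{i-1}\in E(G)$, then deleting $uu^{+},\,vv^{+},\,c_{i-1}c_{i}$ and inserting $u^{+}c_{i},\,v^{+}c_{i-1}$ does the same. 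I would verify each template by a direct degree-and-trace check, including the boundary indices where an inserted chord coincides with a retained cycle edge and the template collapses back to the base picture. Consequently a Hamilton path exists as soon as some neighbour $c_{i}$ of $u^{+}$ has its partner $\phi(c_{i}):=c_{i-1}$ (when $c_{i}\in A$) or $c_{i+1}$ (when $c_{i}\in B$) lying in $N(v^{+})$.

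Now suppose no Hamilton path with endvertices $u,v$ exists. Then $\phi\big(N(u^{+})\big)\cap N(v^{+})=\varnothing$. The map $\phi$ shifts $A$-neighbours backward and $B$-neighbours forward, and is injective on each arc; its two image ranges, contained in $\{c_{2},\dots,c_{m-1}\}$ and $\{c_{m+2},\dots,c_{n},c_{1},c_{2}\}$ respectively, meet only in $\{c_{2}\}=\{u^{+}\}$, which is hit precisely by the two forced cycle-neighbours $c_{1},c_{3}\in N(u^{+})$. Hence $|\phi(N(u^{+}))|=d(u^{+})-1$. Crucially, $v^{+}=c_{m+1}$ is never a value of $\phi$, since its only conceivable preimages $c_{m+2}\in B$ and $c_{m}\in A$ are sent elsewhere. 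Therefore $\phi(N(u^{+}))\cup\{v^{+}\}$ has size $d(u^{+})$ and, because $v^{+}\notin N(v^{+})$, is disjoint from $N(v^{+})$. This forces $d(v^{+})\leq n-d(u^{+})$, i.e. $d(u^{+})+d(v^{+})\leq n$, contradicting the hypothesis and completing the successor case (and, via $C^{-}$, the predecessor case).

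The main obstacle is the bookkeeping in the final paragraph: the naive count only yields $d(u^{+})+d(v^{+})\leq n+1$, which merely matches the hypothesis. Extracting the last unit requires two sharp observations working together: that $\phi$ has \emph{exactly} one collision, forced by the two automatic cycle-edges incident to $u^{+}$, and that the vertex $v^{+}$ can \emph{never} be an image of $\phi$, which supplies the extra forbidden vertex in $N(v^{+})^{c}$. The other delicate point is ensuring both reroutings stay valid at the arc boundaries, so that no neighbour of $u^{+}$ is credited with a usable partner that does not in fact assemble into a single Hamilton path.
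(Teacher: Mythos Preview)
The paper does not prove Lemma~2.2; it is quoted from Ore's monograph \cite{Ore2} and used as a black box. So there is no ``paper's own proof'' to compare against, and your proposal is supplying a self-contained argument where the paper gives none.

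Your argument is correct. The reduction by reversing $C$ and the disposal of the cases $v\in\{u^{+},u^{-}\}$ are clean. The two rerouting templates are the standard crossing-chord constructions, and your verification at the boundary indices (where an inserted chord coincides with a retained cycle edge and the construction collapses to the base picture $u^{+}v^{+}\in E(G)$) is exactly what is needed. The counting step is also right: restricting $\phi$ to $N(u^{+})$ (noting $u^{+}=c_{2}\notin N(u^{+})$), the image of $N(u^{+})\cap A\subseteq\{c_{3},\dots,c_{m}\}$ lies in $\{c_{2},\dots,c_{m-1}\}$ and the image of $N(u^{+})\cap B\subseteq\{c_{m+1},\dots,c_{n},c_{1}\}$ lies in $\{c_{m+2},\dots,c_{n},c_{1},c_{2}\}$, and these overlap only at $c_{2}$, hit precisely by the forced neighbours $c_{1},c_{3}$ of $u^{+}$. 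One minor point worth making explicit in a write-up: the only other candidate collision value is $c_{1}$, but its sole $A$-preimage is $c_{2}\notin N(u^{+})$, so no second collision can occur. Together with the observation that $v^{+}=c_{m+1}$ has no $\phi$-preimage in either arc, you get $|\phi(N(u^{+}))\cup\{v^{+}\}|=d(u^{+})$ disjoint from $N(v^{+})$, hence $d(u^{+})+d(v^{+})\le n$, the desired contradiction. This is essentially the classical argument behind Ore's result, carried out carefully.
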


\begin{lem}\label{lem 2.3}\cite{Wang2}
 If $P$ is a path of length $k$ in a graph $G$ and $u, v$ are two vertices in $G-V(P)$ such that $e(\{u,v\}, V(P))\geq k+2$, then $G[V(P)\cup \{u,v\}]$ has a Hamilton path.
\end{lem}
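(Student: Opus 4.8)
The plan is to construct the desired Hamilton path of $H:=G[V(P)\cup\{u,v\}]$ from the given path $P=x_{0}x_{1}\cdots x_{k}$ (which has $k+1$ vertices) by absorbing the two extra vertices $u,v$ into it. Write $N_{u}=N(u)\cap V(P)$, $N_{v}=N(v)\cap V(P)$ and $a=|N_{u}|$, $b=|N_{v}|$, so that the hypothesis is exactly $a+b=e(\{u,v\},V(P))\ge k+2$; note that $a,b\le k+1$ forces $a,b\ge1$. Call a vertex $w\in\{u,v\}$ \emph{insertable} if either $wx_{0}\in E(G)$ or $wx_{k}\in E(G)$ (so $P$ extends through an end) or $wx_{i-1},wx_{i}\in E(G)$ for some $i$ (so $w$ splices into the gap, replacing $x_{i-1}x_{i}$ by $x_{i-1}wx_{i}$); otherwise $N_{w}\subseteq\{x_{1},\dots,x_{k-1}\}$ contains no two consecutive vertices and we call $w$ \emph{free}. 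The first and cleanest step is to observe that $u,v$ cannot both be free: a free $w$ has $|N_{w}|\le\lceil (k-1)/2\rceil$, so two free vertices would give $a+b\le 2\lceil (k-1)/2\rceil\le k$, contradicting $a+b\ge k+2$. Hence I may assume $u$ is insertable.

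Suppose next that $v$ is insertable as well. Let $A_{u}$ and $A_{v}$ be the sets of admissible positions (end-extensions and internal gaps) for $u$ and for $v$; both are non-empty. If I can choose two \emph{distinct} admissible positions, one for $u$ and one for $v$, then inserting the two vertices there at once produces a Hamilton path of $H$. By a Hall-type argument for two objects this is possible unless $A_{u}=A_{v}$ consists of a single common position. I will rule that degenerate situation out by a counting estimate: if the common position is the internal gap at $x_{i-1}x_{i}$, then both $N_{u}$ and $N_{v}$ lie in $\{x_{1},\dots,x_{k-1}\}$ and each induces exactly one consecutive pair, and the largest subset of a path on $k-1$ vertices inducing a single edge has size at most $(k+1)/2$; thus $a,b\le (k+1)/2$ and $a+b\le k+1$, a contradiction (the case of a common end-extension is similar). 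So when both vertices are insertable the distinct-position choice always exists, and this case is complete.

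It remains to treat the case that exactly one vertex is free, say $v$ is free and $u$ is insertable; here $b\le\lceil (k-1)/2\rceil$ and hence $a\ge k+2-\lceil (k-1)/2\rceil$ is large, i.e.\ $u$ is adjacent to more than half of $V(P)$. Since $v$ is free it can never be spliced into a monotone stretch of $P$, so in the final path $v$ must either be an endpoint or lie between two of its neighbours that are brought together by folding $P$ at $u$. If $ux_{0}\in E(G)$ I use the fold $x_{k}x_{k-1}\cdots x_{j+1}\,u\,x_{0}x_{1}\cdots x_{j}$ followed by $v$, which is a Hamilton path as soon as some $x_{j}\in N_{v}$ has $x_{j+1}\in N_{u}$; such a $j$ must exist, for otherwise the index set of $N_{u}$ would be disjoint from the $b$-element set obtained by shifting the index set of $N_{v}$ up by one, again forcing $a+b\le k+1$. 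The symmetric fold handles $ux_{k}\in E(G)$.

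The main obstacle is the remaining sub-case $ux_{0},ux_{k}\notin E(G)$ (so $u$ can only splice internally) together with $uv\notin E(G)$: now neither $u$ nor $v$ can serve as an endpoint, and since $u,v$ are non-adjacent the path must route through $u$ and simultaneously bridge $v$ between two of its neighbours. The construction I will aim for is the double fold $x_{0}\cdots x_{s}\,u\,x_{s+2}\,x_{s+1}\,v\,x_{s+3}\cdots x_{k}$, which is a Hamilton path provided $x_{s},x_{s+2}\in N_{u}$ and $x_{s+1},x_{s+3}\in N_{v}$ for some $s$ (with suitable variants near the ends of $P$). Establishing that such an index $s$ exists from $a+b\ge k+2$ — leveraging that $a$ is large and $N_{v}$ is spread out — together with checking the boundary variants, is the technically delicate heart of the proof; the earlier reductions are designed precisely to isolate this single configuration.
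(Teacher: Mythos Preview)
The paper does not prove Lemma~\ref{lem 2.3}; it is quoted from \cite{Wang2}. So there is no ``paper's proof'' to compare against, and I will simply assess your argument on its own.

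Your reduction is sound up through the point where $v$ is free, $u$ is insertable only at internal gaps, and $uv\notin E(G)$. However, the double-fold template
\[
x_{0}\cdots x_{s}\,u\,x_{s+2}\,x_{s+1}\,v\,x_{s+3}\cdots x_{k}
\]
is too rigid: it requires two indices of $N_{v}$ at distance exactly~$2$, which need not exist. For instance, take $k=9$, $N_{u}=\{x_{1},\dots,x_{8}\}$, $N_{v}=\{x_{2},x_{5},x_{8}\}$, $uv\notin E(G)$; here $a+b=11=k+2$, $v$ is free, $u$ is not adjacent to either end, and there is no $s$ with $x_{s+1},x_{s+3}\in N_{v}$. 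So the final sub-case is a genuine gap, not merely ``technically delicate''.

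The fix is to allow an arbitrary-length middle segment. For $i<j$ with $x_{i},x_{j}\in N_{v}$ and $x_{i+1},x_{j+1}\in N_{u}$, the path
\[
x_{0}\cdots x_{i}\,v\,x_{j}x_{j-1}\cdots x_{i+1}\,u\,x_{j+1}\cdots x_{k}
\]
is a Hamilton path of $H$. Such $i<j$ always exist: writing $I_{u},I_{v}\subseteq\{1,\dots,k-1\}$ for the index sets, the set $S=\{\,i: i\in I_{v},\ i+1\in I_{u}\,\}$ satisfies
\[
|S|=|(I_{v}+1)\cap I_{u}|\;\ge\;a+b-\bigl|(I_{v}+1)\cup I_{u}\bigr|\;\ge\;a+b-k\;\ge\;2,
\]
since $(I_{v}+1)\cup I_{u}\subseteq\{1,\dots,k\}$. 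This is exactly the shift-and-count idea you already used in the $ux_{0}\in E(G)$ sub-case, applied twice instead of once. (The same $S$ being non-empty also dispatches the sub-case $uv\in E(G)$ via $x_{0}\cdots x_{i}\,v\,u\,x_{i+1}\cdots x_{k}$, which you passed over without comment.) With this correction your case analysis goes through.
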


\begin{lem}\label{lem 2.4}\cite{P}
 If $G$ is a graph on $n$ vertices and $\sigma_{2}(G)\geq n+1$, then $G$ is Hamilton-connected.
\end{lem}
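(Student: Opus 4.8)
The plan is to recast the statement as the problem of finding a spanning subgraph consisting of two vertex-disjoint cycles of the prescribed orders: since $n_1+n_2=n$, any two disjoint cycles of orders $n_1,n_2$ automatically cover $V(G)$, so it is equivalent to produce a partition $V(G)=A\cup B$ with $|A|=n_1$, $|B|=n_2$ for which both $G[A]$ and $G[B]$ are Hamiltonian. I assume throughout that $n_1\le n_2$. To fix the ambient structure I first note that $\sigma_2(G)\ge n+2>n$ forces $G\ne K_{n/2,n/2}$ (the latter has $\sigma_2=n$), so by Theorem~\ref{thm 1.2} and Theorem~\ref{thm A} the graph $G$ is pancyclic, and by Lemma~\ref{lem 2.4} it is Hamilton-connected. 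In particular $G$ contains a cycle of order $n_1$, so at least one admissible set $A$ (with $G[A]$ Hamiltonian) exists.

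Next I would set up an extremal configuration. Among all $A\subseteq V(G)$ with $|A|=n_1$ and $G[A]$ Hamiltonian, choose one so that the circumference of $H:=G-A=G[B]$ is as large as possible, and subject to this so that $e(B)$ is maximum. Let $D$ be a longest cycle of $H$, of length $\ell$, and put $R=B\setminus V(D)$; the target becomes $\ell=n_2$, i.e.\ $R=\emptyset$. Supposing $R\ne\emptyset$, the first block of work is the standard local analysis of the longest cycle $D$: by maximality no vertex of $R$ can be inserted into $D$, which caps the number of neighbours each $r\in R$ has on $D$ and forces the usual pattern of ``gaps'' on $D$. This is precisely the setting in which Lemma~\ref{lem 2.3} (absorbing two outside vertices into a path) applies, while Lemma~\ref{lem 2.2} lets me turn a Hamilton cycle of $G[A]$ into a Hamilton path of $G[A]$ between two prescribed vertices — the tool I need to re-close $A$ after an exchange.

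The heart of the argument is an exchange step contradicting maximality. Because $\sigma_2(G)\ge n+2$, a non-adjacent pair inside $R$ (or a vertex of $R$ paired with a low-degree vertex of $D$) must send many edges out of $B$, hence into $A$. I would use these edges to $A$ to trade a suitable $a\in A$ for some $r\in R$: move $r$ into $A$ and $a$ into $B$, then invoke Lemma~\ref{lem 2.1} (or Lemma~\ref{lem 2.2}) to recover a Hamilton cycle of the new side $G[(A\setminus\{a\})\cup\{r\}]$, while Lemma~\ref{lem 2.3} absorbs $a$, and if possible further vertices of $R$, into a strictly longer cycle of the new $G[B]$, contradicting the choice of $A$. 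The bookkeeping is arranged so that the degree sum $n+2$ is apportioned as ``$\ge n_1$ inside the $A$-side'' (to re-close $A$ via Lemma~\ref{lem 2.1}) and ``$\ge \ell+2$ across to the $D$-side'' (to absorb via Lemma~\ref{lem 2.3}); the identity $n=n_1+n_2$ together with $\ell<n_2$ leaves exactly the slack $n_2-\ell\ge1$ needed to meet both demands.

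I expect the main obstacle to be this balancing under the razor-thin margin $+2$: unlike the $+4$ bound of Theorem~\ref{thm 1.4}, there is almost no slack, so the exchange must be engineered to waste no edge, and the degenerate cases must be handled individually. The worst offenders will be (a) the case $|R|=1$, where Lemma~\ref{lem 2.3} cannot be fed two outside vertices and one must instead borrow a vertex of $D$ (or of $A$) to form the required pair, and (b) the boundary value $n_1=3$, where $G[A]$ Hamiltonian merely means $A$ is a triangle, leaving very little freedom to re-close $A$ after a swap and making the degree inequalities tight. I would therefore isolate $n_1=3$ and the symmetric small cases as separate lemmas, and treat the bulk range $n_1\ge 4$ by the exchange scheme above.
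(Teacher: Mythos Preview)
Your proposal does not address the stated lemma at all. Lemma~\ref{lem 2.4} asserts that $\sigma_2(G)\ge n+1$ forces $G$ to be Hamilton-connected; it says nothing about partitioning $V(G)$ into two vertex sets carrying disjoint cycles of prescribed lengths $n_1$ and $n_2$. What you have sketched is an attack on Theorem~\ref{thm 1.5}, the paper's main result. In the paper, Lemma~\ref{lem 2.4} is not proved: it is quoted from the literature (it is the classical Erd\H{o}s--Gallai/Ore theorem on Hamilton-connectedness) and then \emph{used} as a tool inside the proof of Theorem~\ref{thm 1.5}. So there is nothing to compare your argument to on the paper's side.

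Even read charitably as an outline for Theorem~\ref{thm 1.5}, your plan diverges substantially from the paper's proof and has a genuine gap. The paper does not run a longest-cycle-plus-exchange argument on $H=G-A$; instead it first proves a structural lemma (Lemma~\ref{lem 2.6}) that produces a partition $V_1\cup V_2$ with $|V_2|=n_2+2$ and $\sigma_2(G[V_2])\ge n_2+3$, together with a trichotomy for $G[V_2]$, and then handles the three structural cases separately (Claims~1--3), with the small cases $n_1\in\{3,4\}$ dispatched beforehand in Proposition~1. Your exchange scheme, by contrast, hinges on simultaneously re-closing the $A$-side and strictly lengthening the $B$-side cycle after a swap; you yourself flag that with only $+2$ slack the bookkeeping is tight, and in the critical boundary cases ($|R|=1$, or $n_1=3$) you give no concrete mechanism---only the intention to ``isolate'' them. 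That is exactly where the difficulty lives, and the paper's structural route was designed to avoid this delicate balancing. As written, your proposal is a strategy sketch for the wrong statement, with the hard steps deferred rather than carried out.
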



\begin{lem}\label{lem 2.5}\cite{Yan}
 Let $n_{1}$ and $n_{2}$ be two integers with $n_{1}, n_{2}\geq5$ and let $G$ be a graph on $n=n_{1}+ n_{2}$ vertices with $\sigma_{2}(G)\geq n+2$. Suppose that $(G_{1},G_{2})$ is a partition of $G$ such that $V(G)=V(G_{1})\cup V(G_{2})$ and $|G_{i}|=n_{i}$ for i=1,2, and $G_{2}$ contains a Hamilton cycle. For two nonadjacent vertices $u,v\in V(G_{2})$, if $G_{2}-\{u,v\}$ does not contain a Hamilton path, then there exist two vertices $u',v'\in V(G_{2})$ such that $G_{2}-\{u',v'\}$ contains a Hamilton path and the following holds:
 $d_{G_{2}}(u')+d_{G_{2}}(v')\leq n_{1} $, $e(\{u',v'\},V(G_{1}))\geq n_{2}+2$.
\end{lem}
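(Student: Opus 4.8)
The plan is to first observe that, under the nonadjacency of the pair we seek, the two required conclusions are two faces of the same inequality. Indeed, for any $w\in V(G_2)$ we have $d_G(w)=d_{G_2}(w)+e(w,V(G_1))$, so for a pair $u',v'\in V(G_2)$ that is nonadjacent in $G$ the condition $\sigma_2(G)\ge n+2$ gives $d_G(u')+d_G(v')\ge n+2$ and hence
$e(\{u',v'\},V(G_1))=d_G(u')+d_G(v')-d_{G_2}(u')-d_{G_2}(v')\ge (n+2)-\bigl(d_{G_2}(u')+d_{G_2}(v')\bigr)$.
Thus $d_{G_2}(u')+d_{G_2}(v')\le n_1=n-n_2$ automatically forces $e(\{u',v'\},V(G_1))\ge n_2+2$. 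Consequently it suffices to produce two vertices $u',v'\in V(G_2)$, nonadjacent in $G$, such that $G_2-\{u',v'\}$ has a Hamilton path and $d_{G_2}(u')+d_{G_2}(v')\le n_1$; the edge bound to $V(G_1)$ then comes for free.

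First I would fix a Hamilton cycle $C=x_1x_2\cdots x_{n_2}x_1$ of $G_2$ with a chosen direction and place $u=x_1$, $v=x_m$. Since $u,v$ are nonadjacent they are not consecutive, so $C-\{u,v\}$ splits into two genuine arcs $A=x_2\cdots x_{m-1}$ and $B=x_{m+1}\cdots x_{n_2}$, with endpoints $u^+,v^-$ and $v^+,u^-$ respectively; here $n_2\ge5$ guarantees the arcs and the second neighbours $u^{2\pm},v^{2\pm}$ are well defined. The hypothesis that $G_2-\{u,v\}$ has no Hamilton path rules out every way of splicing $A$ and $B$ into one path. Applying the standard crossing (rotation) argument to the four arc-endpoints, I would extract: (i) the elementary non-edges $u^+v^+,\ u^+u^-,\ v^-v^+,\ v^-u^-\notin E(G_2)$, coming from the four end-to-end joins; and (ii) crossing inequalities bounding how many neighbours each arc-endpoint can have in the opposite arc (for instance no consecutive pair $x_j,x_{j+1}$ of $B$ may satisfy $v^-\sim x_j$ and $u^+\sim x_{j+1}$ at once, and symmetrically for the other orientation and for $A$).

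Next I would use this structure to choose $u',v'$ from among the neighbours $u^\pm,v^\pm$ (bringing in $u^{2\pm},v^{2\pm}$ when chords such as $v\,v^{2+}$ are present), distinguishing a few cases according to which chords at $u$ and $v$ occur and to the relative positions of the arcs. In each case the removed pair is nonadjacent by the non-edges in (i), and a Hamilton path of $G_2-\{u',v'\}$ is built by rerouting $C$ through the available chords so that the vertex isolated by the removal is reabsorbed — this reabsorption step is exactly where Lemma~\ref{lem 2.3} (applied to the long residual arc together with the leftover vertices) is convenient. The bound $d_{G_2}(u')+d_{G_2}(v')\le n_1$ is then obtained by summing the crossing inequalities of (ii), since the neighbours of $u'$ and $v'$ are confined to the two arcs and to $\{u,v\}$; the degree-sum hypothesis is invoked once more to pass from the arc-size estimates, which are naturally phrased in terms of $n_2$, to the required bound in terms of $n_1$.

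I expect the genuine obstacle to be precisely this coordination. A choice that makes the Hamilton path transparent — say, two consecutive vertices of $C$ — makes $u',v'$ adjacent and so kills the degree argument, whereas a choice tailored to make the degree bound easy need not leave a Hamilton path after removal; forcing both simultaneously is what drives the case analysis on the chords incident to $u$ and $v$, and what makes the passage from the $n_2$-type crossing bounds to the target bound $n_1$ delicate. Once a valid pair $u',v'$ is secured, the inequality $e(\{u',v'\},V(G_1))\ge n_2+2$ follows immediately from the reduction in the first paragraph, completing the proof.
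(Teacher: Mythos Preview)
This lemma is not proved in the present paper; it is quoted from \cite{Yan} and used as a black box in the proof of Lemma~\ref{lem 2.6}. There is therefore no in-paper argument to compare your proposal against.

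On its own merits, your opening reduction is correct and worth keeping: once $u',v'$ are nonadjacent, the bound $d_{G_2}(u')+d_{G_2}(v')\le n_1$ together with $\sigma_2(G)\ge n+2$ immediately yields $e(\{u',v'\},V(G_1))\ge n_2+2$. What follows, however, is a plan rather than a proof. You set up the two arcs of $C-\{u,v\}$, record the four obvious non-edges among $u^{\pm},v^{\pm}$, and announce a case analysis on chords at $u$ and $v$, but you do not carry out a single case, exhibit a single Hamilton path of $G_2-\{u',v'\}$, or prove any of the crossing inequalities you invoke.

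The substantive gap is the one you yourself name in the last paragraph. Rotation/crossing estimates of the type you describe live entirely inside $G_2$ and naturally produce inequalities of the shape $d_{G_2}(u')+d_{G_2}(v')\le n_2-c$ for a small constant $c$. The required conclusion is $d_{G_2}(u')+d_{G_2}(v')\le n_1$, and nothing in the hypotheses ties $n_1$ to $n_2$; one may have $n_1=5$ with $n_2$ arbitrarily large. Your proposed bridge --- ``the degree-sum hypothesis is invoked once more to pass from the $n_2$-type crossing bounds to the target bound $n_1$'' --- does not work as stated: $\sigma_2(G)\ge n+2$ supplies \emph{lower} bounds on global degrees and cannot, by itself, turn an upper bound in terms of $n_2$ into one in terms of $n_1$. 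Closing this gap requires a more deliberate choice of $u',v'$ coupled with a direct count of edges from $\{u',v'\}$ into $V(G_1)$, not a post-hoc appeal to the degree-sum condition; until that is done, the argument remains a sketch.
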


The following lemma was obtained independently by Ainouche and Christofides \cite{Ainouche}, Jung \cite{Jung}, Nara \cite{Nara}, and Schmeichel and Hayes \cite{Schmeichel}.
\begin{lem}\label{lem 2.7}\cite{Ainouche,Jung,Nara,Schmeichel}
 Let $G$ be a graph of order $n\geq3$. If $\sigma_{2}(G)\geq n-1$, then one of the following holds:
 \begin{description}
   \item[(i)] $G$ contains a Hamilton cycle.
   \item[(ii)] $K_{m,m+1}$ $\subseteq G \subseteq K_{m}+(m+1)K_{1}$, where $m=\frac{n-1}{2}$ and $n$ is odd with $n\geq5$.
   \item[(iii)] $G\cong K_{1}+(K_{p}\cup K_{q})$ for some positive integers $p$ and $q$ with $p+q=n-1$.
 \end{description}
\end{lem}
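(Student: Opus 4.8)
The plan is to assume that $G$ has no Hamilton cycle (otherwise conclusion (i) holds) and to deduce that $G$ must be one of the graphs in (ii) or (iii). Note first that $\sigma_2(G)=n-1$ exactly, since $\sigma_2(G)\geq n$ would already give a Hamilton cycle by Theorem~\ref{thm 1.2}. Connectivity comes for free: if $G$ were disconnected, a non-adjacent pair taken from two different components would have degree sum at most $n-2$, contradicting $\sigma_2(G)\geq n-1$. So $G$ is connected, and the argument branches according to whether or not $G$ is $2$-connected.

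\textbf{The non-$2$-connected case.} Suppose $G$ has a cut vertex $x$, and let $C_1,\dots,C_k$ ($k\geq2$) be the components of $G-x$. For $u\in C_i$ and $v\in C_j$ with $i\neq j$ the pair $u,v$ is non-adjacent, so $d(u)+d(v)\geq n-1$; on the other hand $d(u)\leq|C_i|$ and $d(v)\leq|C_j|$, because $x$ is the only vertex outside $C_i$ (resp.\ $C_j$) to which $u$ (resp.\ $v$) can be adjacent. Since $\sum_l|C_l|=n-1$, this forces $k=2$, $|C_1|+|C_2|=n-1$, and equality $d(u)=|C_i|$ for every such $u$; the latter says that each $C_i$ is a clique all of whose vertices are joined to $x$. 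Hence $G\cong K_1+(K_p\cup K_q)$ with $p=|C_1|$, $q=|C_2|$ and $p+q=n-1$, which is exactly conclusion (iii).

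\textbf{The $2$-connected case.} Here I first secure a Hamilton path. Adjoin a new vertex $z$ joined to all of $V(G)$; the resulting graph $G'$ on $n+1$ vertices has $\sigma_2(G')\geq(n-1)+2=n+1=|V(G')|$, so $G'$ is Hamiltonian by Theorem~\ref{thm 1.2}, and deleting $z$ from a Hamilton cycle of $G'$ leaves a Hamilton path $x_0x_1\cdots x_{n-1}$ of $G$. Non-Hamiltonicity gives $x_0x_{n-1}\notin E(G)$, whence $d(x_0)+d(x_{n-1})\geq n-1$; but Lemma~\ref{lem 2.1} forbids $d(x_0)+d(x_{n-1})\geq n$, so the sum is exactly $n-1$. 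The standard crossing observation now applies: setting $S^-=\{x_{i-1}:x_0x_i\in E(G)\}$ and comparing with $N(x_{n-1})$, a common element would close a Hamilton cycle, so $S^-$ and $N(x_{n-1})$ are disjoint; as they are subsets of the $(n-1)$-element set $\{x_0,\dots,x_{n-2}\}$ with $|S^-|+|N(x_{n-1})|=n-1$, they partition it.

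\textbf{Extracting the structure, and the main obstacle.} Keeping the endpoint $x_{n-1}$ fixed, I will iterate P\'osa-type rotations from $x_0$ to form the set $I$ of all vertices that arise as an endpoint of some Hamilton path terminating at $x_{n-1}$. The crucial and most delicate step is to show, by carefully controlling the rotation bookkeeping, that $I$ is an independent set (an edge inside $I$ would, after a suitable rotation, produce a Hamilton cycle) and that $I$ is completely joined to $S:=V(G)\setminus I$ with $|I|=|S|+1$. Granting this, every vertex of $I$ has neighbourhood exactly $S$, so a non-adjacent pair inside $I$ gives $2|S|\geq\sigma_2(G)\geq n-1$, while $|I|+|S|\leq n$ together with $|I|=|S|+1$ gives $2|S|\leq n-1$; hence $2|S|=n-1$. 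This forces $n$ odd, $m:=|S|=\frac{n-1}{2}$, $|I|=m+1$, and $V(G)=I\cup S$ with $I$ independent and completely joined to $S$, that is, $K_{m,m+1}\subseteq G\subseteq K_m+(m+1)K_1$; since $K_{m,m+1}$ is $2$-connected only for $m\geq2$, necessarily $n\geq5$, and we land in conclusion (ii). The main obstacle of the whole proof is precisely this last structural analysis: tightening the rotation argument enough to prove that the set of reachable endpoints is independent, has size one more than its common neighbourhood, and is completely joined to it. This is what simultaneously rules out every other near-Hamiltonian configuration and pins down the parity of $n$.
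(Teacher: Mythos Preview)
The paper does not give its own proof of Lemma~2.7; it is quoted as a known result of Ainouche--Christofides, Jung, Nara, and Schmeichel--Hayes, so there is no in-paper argument to compare against. Your reduction (note $\sigma_2(G)=n-1$, connectivity, then split on whether $G$ is $2$-connected) is the right skeleton, and your cut-vertex case is correct and complete.

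The $2$-connected case, however, has a genuine gap that you yourself flag but do not close. Two points. First, the bookkeeping is off: from the crossing/partition argument on a Hamilton $u$--$x_{n-1}$ path one gets $I=V(G)\setminus(\{x_{n-1}\}\cup N(x_{n-1}))$ and $d(u)=|I|$ for every $u\in I$. If $I$ were independent and completely joined to $S=V(G)\setminus I$, then $N(u)=S$ would force $|I|=|S|$, contradicting your claim $|I|=|S|+1$. The correct target is that $I\cup\{x_{n-1}\}$ (not $I$) is the independent side of size $m+1$ and $N(x_{n-1})$ (not $S$) is the other side of size $m$. Second, and more importantly, the parenthetical ``an edge inside $I$ would, after a suitable rotation, produce a Hamilton cycle'' is not justified: an edge from some $u\in I$ to $x_{n-1}$ closes a cycle, but an edge between two vertices of $I$ does not obviously do so, because after rotating to endpoint $u$ the internal order of the path changes and the second $I$-vertex need not sit in a usable position. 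Establishing that $I\cup\{x_{n-1}\}$ is independent and that every $u\in I$ satisfies $N(u)=N(x_{n-1})$ is exactly the substantive work carried out in the cited references (typically by iterating rotations at \emph{both} ends and exploiting $2$-connectivity), and it is missing here. As written, your $2$-connected case is an outline with the decisive structural step asserted rather than proved and with the sizes of $I$ and $S$ interchanged.
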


\begin{lem}\label{lem 2.6}
Let  $n_{1}$ and $n_{2}$ be two integers with $n_{1}, n_{2}\geq5$ and $G$ a graph on $n=n_{1}+ n_{2}$ vertices. If $\sigma_{2}(G)\geq n+2$ and $G$ has no pair of two disjoint cycles of length $n_1$ and $n_2$, then $V(G)$ has a partition $V(G)=V_1\cup V_2$ such that $|V_1|=n_{1}-2$ and $G[V_1]$ has a Hamilton path; $|V_{2}|=n_{2}+2$,  $\sigma_{2}(G[V_2])\geq n_{2}+3$ and moreover, one of the following holds:
\begin{description}
  \item[(i)] For any two vertices $x,y\in V_2$, $G[V_{2}]-\{x,y\}$ has a Hamilton cycle;
   \item[(ii)] $K_{m+2,m+1}$ $\subseteq G[V_{2}]\subseteq K_{m+2}+(m+1)K_{1}$, where  $n_{2}$ is odd and $m=\frac{1}{2}(n_{2}-1)$;
   \item[(iii)] $3K_{1}+(K_{p}\cup K_{q})\subseteq G[V_{2}]\subseteq K_{3}+(K_{p}\cup K_{q})$, where $p$ and $q$ are two positive integers with $p+q=n_{2}-1$.
 \end{description}
\end{lem}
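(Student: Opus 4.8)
The plan is to build the desired partition around a cycle furnished by pancyclicity and then read off the structure of the heavy part from Lemma~\ref{lem 2.7}. Since $\sigma_{2}(G)\geq n+2>n$, $G$ is not the graph $K_{n/2,n/2}$ (for which $\sigma_{2}=n$), so by Theorem~\ref{thm A} the graph $G$ is pancyclic; in particular it contains a cycle $C$ of length $n_{1}$. Write $A=V(C)$ and $B=V(G)\setminus A$, so that $|A|=n_{1}$, $|B|=n_{2}$, $G[A]$ is Hamiltonian, and $G[B]$ has no cycle of length $n_{2}$ (otherwise $G$ would contain the two required disjoint cycles). I will obtain $V_{1},V_{2}$ by transferring two vertices from $A$ into $B$: deleting two consecutive vertices $s,t$ of the Hamilton cycle $C$ leaves a Hamilton path on $A\setminus\{s,t\}$, so setting $V_{1}=A\setminus\{s,t\}$ and $V_{2}=B\cup\{s,t\}$ gives $|V_{1}|=n_{1}-2$ with $G[V_{1}]$ having a Hamilton path, and $|V_{2}|=n_{2}+2$, as required. (Equivalently one may start from a cycle of length $n_{2}$, take a Hamilton path of the non-Hamiltonian $n_{1}$-part, and move its two endpoints, which are non-adjacent; Lemma~\ref{lem 2.1} then bounds their degree inside the part, and Lemma~\ref{lem 2.5} relocates a suitable pair so as to control the transfer.)

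The crucial quantitative step is the bound $\sigma_{2}(G[V_{2}])\geq n_{2}+3$. For non-adjacent $p,q\in V_{2}$ one has
\[
d_{G[V_{2}]}(p)+d_{G[V_{2}]}(q)=\big(d_{G}(p)+d_{G}(q)\big)-e(\{p,q\},V_{1})\geq (n+2)-e(\{p,q\},V_{1}),
\]
so it suffices to prove $e(\{p,q\},V_{1})\leq n_{1}-1$ for every non-adjacent pair $p,q\in V_{2}$. This is the step I expect to be the main obstacle. The mechanism is that $e(\{p,q\},V_{1})\geq n_{1}=|V_{1}|+2$, together with the Hamilton path already present on $V_{1}$, should force a cycle of length $n_{1}$ inside $G[V_{1}\cup\{p,q\}]$ by an insertion argument of the type behind Lemma~\ref{lem 2.3}; combined with a cycle of length $n_{2}$ on the complementary vertices $V_{2}\setminus\{p,q\}$ this would contradict the hypothesis. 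The difficulty is that producing the complementary $n_{2}$-cycle cannot simply assume what we are proving, so the bound has to be forced by choosing the initial cycle $C$ (and the transferred pair) extremally, for instance by maximizing the number of edges inside the heavy part, and by using Lemma~\ref{lem 2.5} to relocate a low-degree pair while preserving a Hamilton path; this breaks the circular dependence between the degree bound and the Hamiltonicity of $G[V_{2}]-\{p,q\}$.

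Once $|V_{2}|=n_{2}+2$ and $\sigma_{2}(G[V_{2}])\geq n_{2}+3$ are secured, the trichotomy follows from Lemma~\ref{lem 2.7}. For any two vertices $x,y\in V_{2}$, the graph $G[V_{2}]-\{x,y\}$ has exactly $n_{2}$ vertices and satisfies $\sigma_{2}(G[V_{2}]-\{x,y\})\geq (n_{2}+3)-4=n_{2}-1$, so Lemma~\ref{lem 2.7} applies: either $G[V_{2}]-\{x,y\}$ has a Hamilton cycle, or it belongs to family (ii) or (iii) of that lemma. If the first alternative holds for every choice of $x,y$ we are in case (i). Otherwise some deletion lands in an exceptional family, and the remaining work is to lift that local description of $G[V_{2}]-\{x,y\}$ to a global description of $G[V_{2}]$ itself. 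Here the two re-inserted vertices account for the index shift: the near-complete-bipartite family becomes $K_{m+2,m+1}\subseteq G[V_{2}]\subseteq K_{m+2}+(m+1)K_{1}$ with $n_{2}$ odd and $m=\tfrac{1}{2}(n_{2}-1)$ (case (ii)), while the join family becomes $3K_{1}+(K_{p}\cup K_{q})\subseteq G[V_{2}]\subseteq K_{3}+(K_{p}\cup K_{q})$ with $p+q=n_{2}-1$ (case (iii)).

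The delicate part of this last step is to check that exactly the displayed sandwich inclusions occur: one must determine how the two extra vertices attach to the exceptional core, using that $\sigma_{2}(G[V_{2}])\geq n_{2}+3$ forbids too-small degrees and that any attachment producing a Hamilton cycle after a further deletion would re-enter case (i). I expect the genuine difficulty of the whole lemma to be concentrated in the degree bound $e(\{p,q\},V_{1})\leq n_{1}-1$ of the second paragraph, since it is precisely the place where the constructions of the two cycles must be coupled, rather than in the comparatively routine structural bookkeeping of the third and fourth paragraphs.
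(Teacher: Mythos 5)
There is a genuine gap, and also a flaw in your primary construction. Starting from an $n_{1}$-cycle $C$ obtained by pancyclicity and deleting two \emph{consecutive} vertices $s,t$ of $C$ gives you no control whatsoever over the transferred pair: since $G[A]$ is Hamiltonian, the non-Hamiltonian side is $G[B]$, and your $s,t$ are adjacent vertices that may have all their neighbours inside $A$, so nothing forces $e(\{s,t\},B)$ to be large, nothing gives $G[V_{2}]$ a Hamilton path, and nothing feeds the degree bound you need. The paper's construction is different in exactly the way that matters: it takes a partition $W_{1}\cup W_{2}$ with $|W_{i}|=n_{i}$ in which \emph{both} parts carry Hamilton paths (this exists because $G$ is Hamiltonian by Theorem~\ref{thm 1.2}; pancyclicity is not needed), chosen to maximize $e(W_{1})+e(W_{2})$ over all such partitions, and transfers the two \emph{endpoints} $s,t$ of a Hamilton path of the non-Hamiltonian part. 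Since that part is not Hamiltonian, $st\notin E(G)$ and Lemma~\ref{lem 2.1} gives $d_{G[W_{1}]}(s)+d_{G[W_{1}]}(t)\leq n_{1}-1$, whence $e(\{s,t\},W_{2})\geq n_{2}+3$; this single inequality is what makes $G[V_{2}]$ traceable via Lemma~\ref{lem 2.3} and what later enters the edge count. Your parenthetical remark gestures at this route but never develops it.

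More importantly, the lemma's central claim $\sigma_{2}(G[V_{2}])\geq n_{2}+3$ is not proved in your proposal: you correctly reduce it to $e(\{p,q\},V_{1})\leq n_{1}-1$ for nonadjacent $p,q\in V_{2}$, correctly diagnose that a direct attack is circular, and then only assert that an extremal choice plus Lemma~\ref{lem 2.5} ``should'' break the circularity. The paper's actual mechanism is a concrete exchange argument that you would still have to supply. Assuming $\sigma_{2}(G[V_{2}])\leq n_{2}+2$, one finds $x,y\in V_{2}$ with $d_{G[V_{2}]}(x)+d_{G[V_{2}]}(y)\leq n_{2}+2$, $e(\{x,y\},V_{1})\geq n_{1}$, \emph{and} a Hamilton path in $G[V_{2}]-\{x,y\}$ (endpoints of a Hamilton path if $G[V_{2}]$ is not Hamiltonian; otherwise the relocated pair from Lemma~\ref{lem 2.5} --- this third property is precisely why that lemma is invoked). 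Swapping $\{s,t\}$ for $\{x,y\}$ then yields a partition into parts of the same sizes, both still carrying Hamilton paths (Lemma~\ref{lem 2.3} again, using $e(\{x,y\},V_{1})\geq n_{1}=|V_{1}|+2$), with
$e(W_{1}')+e(W_{2}')\geq e(W_{1})-(n_{1}-1)+n_{1}+e(W_{2})+(n_{2}+3)-(n_{2}+2)=e(W_{1})+e(W_{2})+2$,
contradicting maximality. Note that the accounting trades edges across both sides simultaneously, which is why the extremal quantity is the \emph{sum} $e(W_{1})+e(W_{2})$ and why the replacement pair must preserve traceability of both parts; none of this bookkeeping appears in your proposal. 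By contrast, your final two paragraphs (applying Lemma~\ref{lem 2.7} to $G[V_{2}]-\{x,y\}$ with $\sigma_{2}\geq n_{2}-1$ and lifting the exceptional structures by observing that $\sigma_{2}(G[V_{2}])\geq n_{2}+3$ forces every vertex of the exceptional independent set, respectively of $K_{p}\cup K_{q}$, to be joined to both re-inserted vertices) match the paper and are indeed routine once the degree bound is in hand --- but that bound is the lemma, and it is missing.
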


\begin{proof}
Let $W_1\cup W_2$ be a partition of $V(G)$ with $|W_1|=n_1,|W_2|=n_2$ such that $G[W_i]$ contains a Hamilton path for each $i\in\{1,2\}$ and $e(W_{1})+e(W_{2})$ is maximum among all such partitions. Since $\sigma_{2}(G)\geq n+2$, Theorem~\ref{thm 1.2} implies that $G$ is Hamiltonian and therefore, the partition above exists. Since $G$ has no pair of  two disjoint cycles of length $n_1$ and $n_2$, one of $G[W_1]$ and $G[W_2]$, say $G[W_1]$, is not Hamiltonian. Let $s$ and $t$ be the two endvertices of a Hamilton path in $G[W_1]$.  Since $G[W_1]$ is not Hamiltonian, $s$ and $t$ are not adjacent and, moreover, by Lemma~\ref{lem 2.1} we have
 $d_{G[W_{1}]}(s)+d_{G[W_{1}]}(t)\leq n_{1}-1$ and, therefore,
 \begin{equation}\label{1}
 e(\{s,t\},W_2)\geq n_{2}+3
 \end{equation}
because $\sigma_2(G)\geq n+2$. Let $V_{1}=W_1\setminus\{s,t\}$ and $V_{2}=W_2\cup\{s,t\}$. It is clear that $G[V_{1}]$ contains a Hamilton path since $s$ and $t$ are the two endvertices of a Hamilton path in $G[W_1]$. Further, by Lemma~\ref{lem 2.3} and (\ref{1}), $G[V_{2}]$ has a Hamilton path.

\noindent{\bf Claim 1}. If $\sigma_{2}(G[V_2])\leq n_{2}+2$ then $V_2$ has  two vertices $x$ and $y$ such that $G[V_2]-\{x,y\}$ contains a Hamilton path and the following two inequalities hold:
\begin{equation}\label{3}
d_{G[V_2]}(x)+d_{G[V_2]}(y)\leq n_{2}+2\ \ {\rm and}\ \  e(\{x,y\},V_1)\geq n_{1}.
\end{equation}

\begin{proof} Assume first that $G[V_2]$ is not Hamiltonian. Choose $x$ and $y$ to be the two endvertices of a Hamilton path in $G[V_2]$. Since $G[V_2]$ is not Hamiltonian, we have $xy\notin E(G)$ and, by Lemma \ref{lem 2.1}, $d_{G[V_2]}(x)+d_{G[V_2]}(y)\leq n_{2}+1$. Thus, $e(\{x,y\},V_1)\geq n_{1}+1$, as desired. Now assume that $G[V_2]$ is Hamiltonian. Let $x'$ and $y'$ be arbitrary two nonadjacent vertices in $G[V_2]$. Since $\sigma_{2}(G[V_2])\leq n_{2}+2$, we may choose $x'$ and $y'$ to be such that satisfy (\ref{3}). If $G[V_2]-\{x',y'\}$ contains a Hamilton path then we are done by letting $x=x'$ and $y=y'$. Otherwise, in Lemma~\ref{lem 2.5} we replace $G_{1},G_{2}$ and $u,v$ with $G[V_1],G[V_2]$ and $x',y'$, respectively. Then $u',v'$ clearly satisfy our requirement for $x,y$, which completes the proof of the claim.
\end{proof}

Let $x$ and $y$ be defined as in Claim 1 and let $W'_{1}=V_1\cup \{x,y\},W'_{2}=V_2\setminus\{x,y\}$. Then $G[W'_{2}]$ contains a Hamilton path. By Lemma~\ref{lem 2.3} and (\ref{3}), $G[W'_{1}]$ has a Hamilton path. Moreover, recall that $V_1=W_1\setminus\{s,t\},V_2=W_2\cup\{s,t\}$ and therefore, $|W_1|=|W'_1|,|W_2|=|W'_2|$. Further, by (\ref{1}) and Claim 1, if $\sigma_{2}(G[V_2])\leq n_{2}+2$ then
\begin{eqnarray*}\label{diff}
e(W'_{1})+e(W'_{2})&=&e((W_{1}\setminus\{s,t\})\cup \{x,y\})+e((W_2\cup\{s,t\})\setminus\{x,y\})\\
&\geq &e(W_{1})-(d_{G[W_{1}]}(s)+d_{G[W_{1}]}(t))+e(\{x,y\},V_1)\\
&&+e(W_2)+e(\{s,t\},W_2)-(d_{G[V_2]}(x)+d_{G[V_2]}(y))\\
&\geq &e(W_{1})-(n_{1}-1)+n_{1}+ e(W_{2})+(n_{2}+3)-(n_{2}+2)\\
&=&e(W_{1})+e(W_{2})+2.
  \end{eqnarray*}
This contradicts the maximality of $e(W_{1})+e(W_{2})$. Thus, we have $\sigma_{2}(G[V_2])\geq n_{2}+3$.

Suppose now that (i) does not hold, that is, there are two vertices $x,y\in V_2$ such that $G[V_2]-\{x,y\}$ is not Hamiltonian. Consider the graph $H=G[V_2]-\{x,y\}$. Recalling that $\sigma_{2}(G[V_2])\geq n_{2}+3$, we have $\sigma_{2}(H)\geq n_2-1=|V(H)|-1$. So by Lemma~\ref{lem 2.7} we have $K_{m,m+1}\subseteq H\subseteq K_{m}+(m+1)K_{1}$,
where  $n_{2}$ is odd and $m=\frac{1}{2}(n_{2}-1)$; or $ H\cong K_1+(K_{p}\cup K_{q})$ for some positive integers $p$ and $q$ with $p+q=n_{2}-1$. For the former case, we notice that $H\subseteq K_{m}+(m+1)K_{1}$ means that $H$ contains at least $m+1$ pairwise nonadjacent vertices, which is of course the case for $G[V_2]$. Further, the condition $\sigma_{2}(G[V_2])\geq n_2+3$ implies that each of these $m+1$ vertices is adjacent to both $x$ and $y$. Thus, $K_{m+2,m+1}\subseteq G[V_2]\subseteq K_{m+2}+(m+1)K_{1}$. If the latter holds, it is clear that $3K_{1}+(K_{p}\cup K_{q})\subseteq G[V_2]\subseteq K_{3}+(K_{p}\cup K_{q})$ for some positive integers $p$ and $q$ with $p+q=n_{2}-1$. Thus, (ii) or (iii) holds, which completes our proof.
\end{proof}

\section{Proof of Theorem~\ref{thm 1.5}}

The main idea in our proof comes from \cite{Zahar} and \cite{Yan}. In the following, when we say that a graph $G$ has {\it a pair of disjoint $(n_1,n_2)$-cycles} we always mean that $G$  has two disjoint cycles of length $n_1$ and $n_2$. Further, for more clarity  we always write a cycle $C=v_1v_2\cdots v_n$ as the form $v_1v_2\cdots v_nv_1$, instead of its standard form. 

Firstly, we prove the following proposition.

\noindent{\bf Proposition 1}. If $\sigma_{2}(G)\geq n+2$, then $G$ has a pair of disjoint $(3,n-3)$-cycles and a pair of disjoint $(4,n-4)$-cycles.
\begin{proof}
Since $\sigma_{2}(G)\geq n+2$, by Theorem~\ref{thm A}, $G$ has a 3-cycle, denoted by $C=uvwu$. Let $G'=G-\{u,v,w\}$. Obviously, $\sigma_{2}(G')\geq (n+2)-6=|V(G')|-1$. By Lemma~\ref{lem 2.7}, we distinguish among three cases.

\textbf{Case 1.} $G'$ contains a Hamilton cycle.

 In this case, we need only to find a pair of disjoint $(4,n-4)$-cycles in $G$ for $n\geq 8$.
 Let $C'$ be a Hamilton cycle in $G'$ (with a given direction).

\textbf{Subcase 1.1.} $G'$ has two nonadjacent vertices $x$ and $y$ with $d_{G'}(x)+d_{G'}(y)\leq n-2$.

In this case, $e(\{x,y\}, V(C))\geq 4$. This implies that $x$ or $y$ is adjacent to at least two vertices on $C$, say $uy, vy \in E(G)$. It is clear that $u,v,w,y$ form a 4-cycle. If $y^{+}y^{-}\in E(G)$, then $G'-y$ has an $(n-4)$-cycle. Now assume that $y^{+}y^{-}\notin E(G)$. If $d_{G'}(y^{+})+d_{G'}(y^{-})\geq n-2$ then by Lemma~\ref{lem 2.1}, $G'-y$ is Hamiltonian.  Now we consider the case that $d_{G'}(y^{+})+d_{G'}(y^{-})\leq n-3$. In this case, again by $\sigma_{2}(G)\geq n+2$ we have $e(\{y^-,y^+\},V(C))\geq 5$. This means that at most one of $uy^{-},vy^{-},wy^{-},uy^{+},vy^{+},wy^{+}$ is not an edge in $E(G)$, say $uy^-\notin E(G)$.  Replacing the roles $y^-,y^+$ by $y,y^{2+}$ and repeating the discussion above, we can see that at most one of $uy,vy,wy,uy^{2+},vy^{2+},wy^{2+}$ is not an edge in $E(G)$. Thus,  $zC^{-}[y^{-},y^{2+}]z$ is an $(n-4)$-cycle and $(\{u,v,w\}\setminus\{z\})\cup\{y,y^+\}$ induces a 4-cycle, where $z$ is a common neighbour of $y^-$ and $y^{2+}$ on $C$.

\textbf{Subcase 1.2.} $d_{G'}(x)+d_{G'}(y)\geq n-1$ for any two nonadjacent vertices $x,y\in V(G')$.

We first assume that $G'$ contains a vertex $z$ that is adjacent to at least two vertices on $C$, say $uz,vz \in E(G)$. If $z^-$ and $z^+$ are adjacent then we are done. If $z^-$ and $z^+$ are not adjacent,  then by Lemma~\ref{lem 2.1}, $G'-z$ is Hamiltonian since $d_{G'}(z^-)+d_{G'}(z^+)\geq n-1$, i.e., $d_{G'-z}(z^-)+d_{G'-z}(z^+)\geq (n-1)-2>|V(G'-z)|$. Thus, we get a pair of disjoint $(4,n-4)$-cycles $uwvzu$ and the Hamilton cycle in $G'-z$.

 We now assume that every vertex in $V(G')$ is adjacent to at most one vertex on $C$. Let $z$ be an arbitrary vertex in $G'$ and, without loss of generality, assume that $uz,vz\notin E(G)$. If $d_{G'}(z)\leq \frac{1}{2}(n+1)$, then $d_{G}(u)\geq \frac{1}{2}(n+1)$ and $d_{G}(v)\geq \frac{1}{2}(n+1)$ since $\sigma_{2}(G)\geq n+2$. This means that $u$ and $v$ have a common neighbour in $G'$, which contradicts  that $e(z,C)\leq 1$. Thus,  $\delta(G')\geq\frac{1}{2}(n+2)$.
Let $ux$ be an arbitrary edge with $x\in V(G')$. Since  $e(x,C)\leq 1$, $x$ and $v$ are not adjacent and therefore, have a common neighbour $y$ in $G'-x$.
Notice that $u$ and $v$ are the only neighbours of $x$ and $y$ on $C$, respectively. Therefore, $w$ is neither adjacent to $x$ nor to $y$. So again by $\sigma_{2}(G)\geq n+2$, $w$ is adjacent to at least two vertices $x'$ and $y'$ in $G'-\{x,y\}$. Further, since $\delta(G')\geq\frac{1}{2}(n+2)$, so by Lemma \ref{lem 2.4}, $V(G')\setminus\{x,y\}$ is Hamilton-connected. Thus, we get a pair of disjoint $(4,n-4)$-cycles $vuxyv$ and $wPw$, where $P$ is a Hamilton path in $G'-\{x,y\}$ with endvertices $x'$ and $y'$.

\textbf{Case 2.} $K_{m,m+1}$ $\subseteq G' \subseteq K_{m}+(m+1)K_{1}$, where $m=\frac{1}{2}(n-4)$ and $n\geq8$ is even.

In this case $G'$ has a set of $m+1$ pairwise non-adjacent vertices. We denote this set by $S$ and denote $T=V(G')\setminus S$. It is clear that $|S|=m+1=\frac{1}{2}(n-2)$ and $|T|=m=\frac{1}{2}(n-4)$.  Since $\sigma_{2}(G)\geq n+2$, each vertex in $S$ must be adjacent to all of the vertices on $C$ and the vertices in $T$. Choose a vertex  $s\in S$ and two vertices $t_1,t_2\in T$.  If $t_1$ and $t_2$ are not adjacent then, again by $\sigma_{2}(G)\geq n+2$, either $t_1$ or $t_2$ is adjacent to one of $u,v,w$, say $wt_1\in E(G)$. In either two cases that $t_1$ and $t_2$ are adjacent or not, one can see that $\{u,v,s\}$ forms a 3-cycle and $V(G)\setminus\{u,v,s\}$ has an $(n-3)$-cycle while $\{u,v,w,s\}$  forms a 4-cycle and $V(G)\setminus\{u,v,w,s\}$ has an $(n-4)$-cycle.

\textbf{Case 3.} $G'\cong K_{1}+(K_{p}\cup K_{q})$ for two positive integers $p$ and $q$ with $p+q=n-4$.

 Without loss of generality, we assume that $p\geq q$. If $n=6$, then $G'$ is a path $P=xyz$ where $y\in V(K_1)$. Since $\sigma_{2}(G)\geq n+2$, $y$ is adjacent to two vertices in $\{u,v,w\}$, say $vy,wy\in E(G)$. Thus, $G$ has a pair of disjoint $(3,3)$-cycles $vxyv$ and $uwzu$. We now consider the case that $n\geq7$. Again by the assumption that $\sigma_{2}(G)\geq n+2$, each vertex in $K_p$ and $K_q$ is adjacent to all of the vertices $u,v,w$ because the vertices between $K_p$ and $K_q$ are not adjacent. In this case, it is not difficult to find a pair of disjoint $(3,n-3)$-cycles and a pair of disjoint $(4,n-4)$-cycles in $G$.
The proposition follows.
\end{proof}

By Proposition 1, we need only to consider the case that $n_1,n_2\geq 5$. Throughout the following, we assume that $V(G)$ has a partition $V(G)=V_1\cup V_2$ such that
\begin{description}
 \item[1)]  $|V_1|=n_{1}-2$ and $G[V_1]$ has a Hamilton path;
 \item[2)] $|V_{2}|=n_{2}+2$,  $\sigma_{2}(G[V_2])\geq n_{2}+3$ and therefore, $G[V_2]$ has  a Hamilton cycle, denoted by $C_2$.
\end{description}
Thus, to prove Theorem \ref{thm 1.5}, it suffices to prove that if $G$ satisfies any one of (i),(ii),(iii) in Lemma~\ref{lem 2.6} then $G$ has a pair of disjoint $(n_{1},n_{2})$-cycles. We will distinguish among three claims. The following two propositions are necessary for our further discussion.

\noindent{\bf Proposition 2}. $G[V_1]$ is Hamiltonian or $G[V_1\cup\{x,y\}]$ is  Hamiltonian for some two consecutive vertices $x$ and $y$ on $C_2$.
\begin{proof} Let $u$ and $v$ be the two endvertices of a Hamilton path in $G[V_1]$. If $u$ and $v$ are adjacent then clearly $G[V_1]$ is Hamiltonian. If $d_{G[V_{1}]}(u)+d_{G[V_{1}]}(v)\geq n_{1}-2$ then by Lemma~\ref{lem 2.1}, again $G[V_{1}]$ is Hamiltonian. Now assume that $u$ and $v$ are not adjacent and $d_{G[V_{1}]}(u)+d_{G[V_{1}]}(v)\leq n_{1}-3$. In this case we have $e(\{u,v\},V_{2})\geq n_{2}+5$, which implies that there are two consecutive vertices $x$ and $y$ on $C_{2}$ such that $e(\{u,v\},\{x,y\})$ $\geq 3$. Thus, $G[V_{1}\cup \{x,y\}]$ is Hamiltonian.
\end{proof}
\noindent{\bf Proposition 3}. Let $u\in V_1$ and $x,y\in V_2$. If $G[V_{1}\cup \{x,y\}]$ is not Hamiltonian but has  a Hamilton path with $u$ and $x$ as the two endvertices, then $u$ and $x$ have three common neighbours in $V_{2}\setminus\{x,y\}$.
\begin{proof}
Let $\Gamma=G[V_{1}\cup \{x,y\}]$. If $ux\in E(G)$, or $ ux\notin E(G)$ but $d_{\Gamma}(u)+d_{\Gamma}(x)\geq n_{1}$ then $\Gamma$ would be Hamiltonian because of Lemma~\ref{lem 2.1}. Now assume that $ ux\notin E(G)$ and $d_{\Gamma}(u)+d_{\Gamma}(x)\leq n_{1}-1$. Then $e(\{u,x\},V_{2}\setminus\{x,y\})\geq n_{2}+3$. This means that $u$ and $x$ have at least three common neighbours in $V_{2}\setminus\{x,y\}$.
\end{proof}

\noindent{\bf Claim 1}. If  $G[V_{2}]-\{x,y\}$ is Hamiltonian for any two vertices $x,y\in V_{2} $ then $G$ has a pair of disjoint $(n_{1},n_{2})$-cycles.
\begin{proof}
It suffices to prove that $G[V_1\cup\{x,y\}]$ is Hamiltonian for some $x,y\in V_2$. If $G[V_1\cup\{x,y\}]$ is  Hamiltonian for two consecutive vertices $x$ and $y$ on $C_2$ then the claim follows directly. Now assume that $G[V_1\cup\{x,y\}]$ is not Hamiltonian for any two consecutive vertices $x$ and $y$ on $C_2$. By Proposition 2, let $C_{1}$ be a Hamilton cycle in $G[V_{1}]$.

\textbf{Case 1.} $\sigma_{2}(G[V_{1}])< n_{1}$.

Let $u$ and $v$ be two nonadjacent vertices in $V_1$ and $d_{G[V_{1}]}(u)+d_{G[V_{1}]}(v)< n_{1}$. Since $\sigma_2(G)\geq n+2$, we have $e(\{u,v\},V_{2})\geq n_{2}+3$. This implies that $C_{2}$ has two consecutive vertices $x$ and $y$ such that $e(\{u,v\},\{x,y\})\geq 3$ and therefore, $x$ or $y$ is adjacent to both $u$ and $v$, say $xu,xv\in E(G)$. Since $G[V_1\cup\{x,y\}]$ is not Hamiltonian, $\{u,v\}$ is not the pair of the two endvertices of any Hamilton path in $G[V_1]$. This implies that $u^{-}v^{-}\notin E(G)$ and, moreover, by Lemma~\ref{lem 2.2}, $d_{G[V_{1}]}(u^{-})+d_{G[V_{1}]}(v^{-})\leq n_{1}-2$. Thus, $e(\{u^{-},v^{-}\},V_{2})\geq n_{2}+4$ and therefore, $u^-$ and $v^-$ have a common neighbour $x'$ in $V_2$ with $x'\not=x$. So $C_{1}^{-}[u^{-},v]xC_{1}[u,v^{-}]x'u^{-}$ is a Hamilton cycle in $G[V_1\cup\{x,x'\}]$, again as desired.

\textbf{Case 2.} $\sigma_{2}(G[V_{1}])\geq n_{1}$.

Notice that $n_{1}=|V_{1}|+2$. So by Lemma~\ref{lem 2.4}, $G[V_{1}]$ is Hamilton-connected.

Let $ux\in E(G)$ with $u\in V(C_{1})$ and $x\in V(C_2)$. Then $G[V_{1}\cup \{x,x^{+}\}]$ has a Hamilton path with two  endvertices $u^{+}\in V(C_1)$ and $x^{+}\in V(C_2)$. Since $G[V_{1}\cup \{x,x^{+}\}]$ is not Hamiltonian, then by Proposition 3, $u^+$ and $x^+$  have a common neighbour  $y\in V_{2}\setminus\{x,x^{+}\}$, i.e., $x^{+}y,u^{+}y\in E(G)$. Since $\sigma_{2}(G[V_{1}])\geq n_{1}=|V_{1}|+2$,  we have $d_{G[V_{1}]-u}(w)+d_{G[V_{1}]-u}(w')\geq n_{1}-2=|V_{1}\setminus\{u\}|+1$ for any two nonadjacent vertices $w$ and $w'$ in $V_1\setminus\{u\}$. Thus, by Lemma~\ref{lem 2.4}, $G[V_{1}]-u$ is Hamilton-connected.

If $e(x,V_{1})\geq2$, then $x$ is adjacent to a vertex $v$ in $V_1$ other than $u$. Moreover, we may choose a direction of $C_{1}$ such that $v\neq u^{+}$. Then $G[V_{1}]-u$ has a Hamilton path $P$ with endvertices $v$ and  $u^{+}$ because $G[V_{1}]-u$ is Hamilton-connected. Thus, $G[V_{1}\cup \{x,y\}]$ has a Hamilton path $uxPy$ with endvertices $u$ and $y$. If $G[V_{1}\cup \{x,y\}]$ is Hamiltonian, then we are done. If not, by Proposition 3, $u$ and $y$ have a common neighbour $z\in V_{2}\setminus\{x,y\}$. Thus, $G[V_{1}\cup \{z,y\}]$ has a Hamilton cycle $yC_{1}[u^{+},u]zy$.

Now consider the case that $e(x,V_{1})\leq 1$ for any vertex $x\in V_{2}$. First assume that there is a vertex $x\in V_{2}$ such that $d_{G[V_{2}]}(x)\leq \lfloor \frac{n_{2}+5}{2}\rfloor$, i.e. $d_G(x)\leq \lfloor \frac{n_{2}+5}{2}\rfloor +1$. Let $u$ and $v$ be two distinct vertices in $G[V_{1}]$ such that $ux\notin E(G)$, $vx\notin E(G)$. By the degree sum condition, we have $d(u)+d(v)+2d_G(x)\geq 2(n+2)$£¬ and therefore, $d(u)+d(v)\geq 2n_{1}+n_{2}-3$. Since $|V_{1}|=n_{1}-2$, we have $d_{G[V_{1}]}(u)+d_{G[V_{1}]}(v)\leq 2(n_{1}-3)$, and then $e(\{u,v\},V_{2})\geq n_{2}+3$. Thus, there is a vertex $y\in V_{2}$ such that $uy,vy\in E(G)$, a contradiction.

Assume now that $d_{G[V_{2}]}(x)\geq \lfloor \frac{n_{2}+5}{2}\rfloor +1\geq \frac{n_{2}+6}{2}$ for any vertex $x\in V_{2}$. Let $ux\in E(G)$ with $u\in V(C_{1})$ and $x\in V(C_{2})$. Then $u^{+}$ and $x^{+}$ are the endvertices of the Hamilton path $C_{1}[u^{+},u]xx^{+}$ in $G[V_{1}\cup \{x,x^{+}\}]$. If $G[V_1\cup\{x,x^+\}$ is not Hamiltonian then by Proposition 3, $u^+$ and $x^+$ have three common neighbours, say one of which is $w$. Similarly, $u^-$ and $x^+$ have three common neighbours. Further, notice that $C^-_1[u,u^+]wx^+$ is a Hamilton path with endvertices $u$ and $x^+$. If $G[V_1\cup\{w,x^+\}]$ is not Hamiltonian then $u$ and $x^+$ have three common neighbours. Consequently, we can choose three distinct vertices $y_1,y_2,y_3\in V_2$ such that $u^+y_1,uy_2,u^-y_3,x^+y_1,x^+y_2,x^+y_3\in E(G)$. Thus, $G[(V_{1}-u)\cup \{y_1,x^{+},y_3\}]$ has a Hamilton cycle $C_{1}[u^{+},u^{-}]y_3x^{+}y_1u^{+}$.
Write $\Gamma=G[V_{2}- \{y_1,x^{+},y_3\}]$. Notice that, for any pair of nonadjacent vertices $a$ and $b$ in $\Gamma$, we have $d_{\Gamma}(a)+d_{\Gamma}(b)\geq (n_{2}+6)-6=|\Gamma|+1$, i.e., $\sigma_{2}(\Gamma)\geq |\Gamma|+1$. So by Lemma~\ref{lem 2.4}, $\Gamma$ is Hamilton-connected. Thus, $\Gamma$ has a Hamilton path $P$ with endvertices $x$ and $y_2$, and hence we obtain a desired $n_2$-cycle $uPu$.
\end{proof}

\noindent{\bf Claim 2}.  If $K_{m+2,m+1}$ $\subseteq G[V_{2}]\subseteq K_{m+2}+(m+1)K_{1}$, then $G$ has a pair of disjoint $(n_{1},n_{2})$-cycles.
\begin{proof}
It is clear that $G[V_{2}]$ has a set of $m+1$ pairwise non-adjacent vertices. We denote this set by $S$ and denote $T=V_{2}\setminus S$. Thus, $|S|=m+1=\frac{1}{2}(n_{2}+1)\geq 3$ and $|T|=m+2=\frac{1}{2}(n_{2}+3)\geq 4$. Since $\sigma_{2}(G[V_{2}])=n_{2}+3$, $G[T]$ has at most one isolated vertex.

Let $P$ be a Hamilton path in $G[V_{1}]$ and let $u$ and $v$ be its two endvertices.
If $d_{G[V_{1}]}(u)+d_{G[V_{1}]}(v)\leq n_{1}-3$ and $uv\notin E(G)$, then $e(\{u,v\},V_{2})\geq n_{2}+5$, say $e(u,V_{2})\geq \frac{1}{2}(n_{2}+5)$. Thus, $e(u,T)\geq2$ and $e(u,S)\geq1$, say $us,ut_{1},ut_{2}\in E(G)$ where $s\in S, t_{1},t_{2}\in T$. Since $G[T]$ has at most one isolated vertex, one of $G[T]-t_{1}$ and $G[T]-t_{2}$ contains an edge, say $E(G[T]-t_{1})\not=\emptyset$. If there is a vertex $s_{1}\in S$ such that $vs_{1}\in E(G)$, then $G$ has a pair of disjoint $n_{1}$-cycles $t_{1}Ps_{1}$ and an $n_{2}$-cycles in $G[V_{2}]-\{t_{1},s_{1}\}$. Otherwise, we have $e(v,S)=0$ and therefore, $e(v,T)\geq 3$ because  $e(\{u,v\},V_{2})\geq n_{2}+5$. Thus, there is a vertex $t\in T$ such that $E(G[T]-t)$ is not empty. Hence, $G$ has a pair of disjoint $n_{1}$-cycle $sPt$ and an $n_{2}$-cycle in $G[V_{2}]-\{t,s\}$.

We now assume that $uv\in E(G)$ or $d_{G[V_{1}]}(u)+d_{G[V_{1}]}(v)\geq n_{1}-2$ holds for the two endvertices $u,v$ of any Hamilton path in $G[V_{1}]$. Similar to the proof of Proposition 2, $G[V_{1}]$ is Hamiltonian. Let $C_{1}=v_1v_2\cdots v_{n_{1}-2}v_{1}$ be a Hamilton cycle in $G[V_{1}]$.

Since $\sigma_{2}(G[V_{2}])=n_{2}+3$ and $|S| \geq3$, there are two distinct vertices $s_{1},s_{2}\in S$ such that $e(s_{1},V_{1})\geq\frac{1}{2} (n_{1}-1)$ and $e(s_{2},V_{1})\geq\frac{1}{2} (n_{1}-1)$. Thus, each of $s_{1}$ and $s_{2}$ is adjacent to a pair of two  consecutive vertices on $C_{1}$, say $\{v_{i},v_{i+1}\}$ and $\{v_{j},v_{j+1}\}$, respectively.

\noindent{\bf Claim 2.1}. If $G$ has no pair of disjoint $(n_{1},n_2)$-cycles and $G[T]$ contains a $P_{4}$ then the pairs $\{v_{i},v_{i+1}\}$ and $\{v_{j},v_{j+1}\}$ can be properly chosen to be distinct.
\begin{proof} If $s_1$ or $s_2$ is adjacent to at least two pairs of two  consecutive vertices on $C_{1}$ then the assertion clearly holds. Assume now that each of $s_1$ and $s_2$ is adjacent to exactly one pair of two  consecutive vertices and moreover, they are adjacent to the same pair, say $\{v_1,v_2\}$. In this case, recall that $e(s_{1},V_{1})\geq\frac{1}{2} (n_{1}-1)$ and $e(s_{2},V_{1})\geq\frac{1}{2} (n_{1}-1)$. This implies that $s_iv_{4},s_iv_{6},\cdots, s_iv_{n_{1}-3} \in E(G)$ and $s_iv_{3},s_iv_{5},\cdots, s_iv_{n_{1}-2} \notin E(G)$  for each $i\in\{1,2\}$ (in this case $n_1$ must be odd). Since $G[T]$ contains a $P_{4}$, $G[T]-\{t\}$ has at least one edge for any $t\in T$.  Thus, if $tv_{3}\in E(G)$ for some $t\in T$ then $G$ has a pair of disjoint $n_{1}$-cycle $s_1C_{1}[v_{4},v_{3}]t$ and an $n_{2}$-cycle in $G[V_{2}]-\{t,s_1\}$, a contradiction. Similarly, if $v_{3}$ is adjacent to some $v_i$ with $i$  odd then $G$ has a pair of disjoint $n_{1}$-cycle $s_{1}v_2s_{2}C^{-}_{1}[v_{i-1},v_{3}]v_iC^{-}_{1}[v_{i},v_{1}]s_{1}$ and an $n_{2}$-cycle in $G[V_{2}]-\{s_{1},s_{2}\}$. This is again a contradiction. Therefore, $v_3$ is adjacent to neither vertex in $T$ nor vertex in $\{v_{5},v_{7},\cdots, v_{n_{1}-2}\}$. Therefore,
$$d_{G}(s_1)+d_{G}(v_{3})\leq \left(\frac{1}{2} (n_{1}-1)+\frac{1}{2} (n_{2}+3)\right)+\left(\frac{1}{2} (n_{1}-3)+\frac{1}{2} (n_{2}+1)\right)< n+2,$$
which contradicts our assumption that $\sigma_2(G)\geq n+2$ since $s_1$ and $v_3$ are not adjacent.
   \end{proof}

If $G[T]$ contains a $P_{4}$ then by Claim 2.1,  $G[V_1\cup\{s_1,s_2\}]$ is Hamiltonian and moreover, it is not difficult to find a  Hamilton cycle in $G[V_2]-\{s_1,s_2\}$.

If $G[T]$ does not contain $P_{4}$, then each component in $G[T]$ is a star (i.e., $K_{1,q}$) or has at most three vertices.  Thus, we may choose two nonadjacent vertices $t_{1},t_{2}\in T$ such that $d_{G[T]}(t_{1})\leq 2$ and $d_{G[T]}(t_2)\leq 2$. Moreover, each of $G[T]-t_1$ and $G[T]-t_2$ has at least one edge  because, except one possible isolated vertex, every vertex in $G[T]$ has degree at least one.  Therefore, $d_{G[V_{2}]}(t_{1})+d_{G[V_{2}]}(t_{2})\leq n_{2}+5$ and hence, $e(\{t_{1},t_2\},V_{1})\geq n_{1}-3$, say $e(t_{1},V_{1})\geq \frac{1}{2} (n_{1}-3)$.  Choose a Hamilton cycle $C_{2}$ in $G[V_{2}]$ such that $t_{1}^{+},t_{1}^{-}\in S$. Since $d_{G[V_{2}]}(t_{1}^{+})+d_{G[V_{2}]}(t_{1}^{-})=n_{2}+3$, we have $e(\{t_{1}^{+},t_{1}^{-}\},V_{1})\geq n_{1}-1$ and therefore, $t_1^-$ or $t_1^+$ is adjacent to two consecutive vertices on $C_{1}$, say $t_{1}^{-}v_{i},t_{1}^{-}v_{i+1}\in E(G)$. If $t_{1}v_{i-1}\in E(G)$, then $G$ has a pair of disjoint $n_{1}$-cycle $t_{1}C_{1}^{-}[v_{i-1},v_{i}]t_{1}^{-}t_{1}$ and an $n_{2}$-cycle in $G[V_{2}]-\{t_{1},t_{1}^{-}\}$ since $G[T]-t_1$ has at least one edge. Similarly, if $t_1$ is adjacent to one of $v_i,v_{i+1},v_{i+2}$ then we can get a pair of disjoint $(n_{1},n_2)$-cycles. Now assume that  $t_{1}c_{i-1},t_{1}c_{i},t_{1}c_{i+1},t_{1}c_{i+2}\notin E(G)$. Then $t_1$ is adjacent to two consecutive vertices $v_{j},v_{j+1}$ on $C_{1}$ as  $e(t_{1},V_{1})\geq \frac{1}{2} (n_{1}-3)$. Therefore, $G$ has a pair of disjoint $n_{1}$-cycle $t_{1}C_{1}[v_{j+1},v_{i}]t_{1}^{-}C_{1}[v_{i+1},v_{j}]t_{1}$ and an $n_{2}$-cycle in $G[V_{2}]-\{t_{1},t_{1}^{-}\}$.
The claim follows.
\end{proof}

\noindent{\bf Claim 3}. If $3K_{1}+(K_{p}\cup K_{q}) \subseteq G[V_{2}] \subseteq K_{3}+(K_{p}\cup K_{q})$ for some positive integers $p$ and $q$ with $p+q=n_{2}-1$, then $G$ has a pair of disjoint $(n_{1},n_{2})$-cycles.

\begin{proof} If $n_{2}=5$, then $K_{4,3}$ $\subseteq 3K_{1}+(K_{p}\cup K_{q}) \subseteq 3K_{1}+K_{4}$. So by Claim 2, Claim 3 holds. Thus, we assume that $n_{2}\geq6$. Let $C_2$ be a Hamilton cycle in $3K_{1}+(K_{p}\cup K_{q})$ and let $M=3K_{1}$. It is clear that $C_2$ is also a Hamilton cycle in $G[V_2]$. So by Proposition 2,  $G[V_1\cup\{x,y\}]$ is  Hamiltonian for some two consecutive vertices $x$ and $y$ on $C_2$ or $G[V_1]$ is Hamiltonian. Noticing that at most one of $x$ and $y$ is in $M$, $(3K_{1}+(K_{p}\cup K_{q}))\setminus\{x,y\}$ is Hamiltonian. So if the former holds, then we are done. We now assume that $G[V_1]$ has a Hamilton cycle $C_1$.

For any vertex $x\in K_p$ and $y\in K_q$, since $xy\notin E(G)$ and $\sigma_{2}(G[V_{2}])=n_{2}+3$, we have $e(\{x,y\},V_{1})\geq n_{1}-1$. Therefore, $x$ or $y$ is adjacent to two consecutive vertices $w,w^{+}$ on $C_{1}$, say $xw,xw^{+}\in E(G)$. Thus, $G[V_{1}\cup \{x,x^{+}\}]$ has a Hamilton path $C_{1}[w^{2+},w^{+}]xx^{+}$ with endvertices $w^{2+}$ and $x^{+}$. If $G[V_{1}\cup \{x,x^{+}\}]$ is Hamiltonian, then we are done. If not, then by Proposition 3, $w^{2+}$ and $x^+$ have  a common neighbour $z\in V_{2}\setminus\{x,x^{+}\}$. It is clear that $G[V_2]-\{x,z\}$ is Hamiltonian because $x\notin M$. Hence, if $G[V_1\cup\{x,z\}]$ is Hamiltonian then we are done. If not, again by Proposition 3, $w^+$ and $z$ have a common neighbour in $V_2\setminus\{x,z\}$, say $t\in V_2\setminus\{x,z\}$. Noticing that at most one of $z$ and $t$ is in $M$, $G[V_2]-\{z,t\}$ is Hamiltonian. Thus, we get a pair of an $n_1$-cycle $C_1[w^{2+},w^+]tzw^{2+}$ and an $n_2$-cycle (Hamilton cycle) in $G[V_2]-\{z,t\}$. \end{proof}

By Lemma \ref{lem 2.6} and the three claims above, Theorem \ref{thm 1.5} follows.

\noindent\textbf{Remark.} Since $\sigma_2(G)\leq \delta(G)$, Theorem~\ref{thm 1.5} gives a generalization of Theorem~\ref{thm 1.3} when $n_{1}$ and $n_{2}$  are both odd. This remains a natural question:  Can $\sigma_{2}(G)\geq n+1$ guarantee that $G$ has a pair of disjoint $(n_1,n_2)$-cycles if at least one of $n_1$ and $n_2$ is even?

 \section{Acknowledgements}
This work was supported by the National Natural Science Foundation of China [Grant numbers, 11471273, 11561058].

\end{document}